\documentclass[11pt]{amsart}
\usepackage{amssymb, latexsym}
\theoremstyle{plain}
\newtheorem{theorem}{Theorem}
\newtheorem{corollary}{Corollary}

\newtheorem {lemma}{Lemma}
\newtheorem{proposition}{Proposition}

\numberwithin{equation}{section}
\renewcommand{\baselinestretch}{1.3}

\begin{document}

\title[Permutations induced by a random parking function ] {The distribution on permutations induced by a random parking function}

\author{Ross G. Pinsky}

%\noindent  pinsky@math.technion.ac.il\ \ \ \ tel: 972-4-829-4083\ \ \  fax: 972-4-829-3388

\address{Department of Mathematics\\
Technion---Israel Institute of Technology\\
Haifa, 32000\\ Israel}
\email{ pinsky@technion.ac.il}

\urladdr{https://pinsky.net.technion.ac.il/}

\subjclass[2010]{60C05, 05A05} \keywords{parking function, random permutation, Borel distribution, left-to-right maximum}
\date{}

\begin{abstract}
A parking function on $[n]$ creates a permutation in $S_n$ via the order in which the $n$ cars appear in the $n$ parking spaces.
Placing the uniform probability measure on the set of parking functions on $[n]$ induces a probability measure on $S_n$. We initiate a study of some properties of this distribution.
Let $P_n^{\text{park}}$ denote this distribution on $S_n$ and let $P_n$ denote the uniform distribution on $S_n$. In particular, we obtain an explicit formula for
$P_n^{\text{park}}(\sigma)$ for all $\sigma\in S_n$. Then we show that for all but an asymptotically $P_n$-negligible set of permutations, one has
$P_n^{\text{park}}(\sigma)\in\left(\frac{(2-\epsilon)^n}{(n+1)^{n-1}},\frac{(2+\epsilon)^n}{(n+1)^{n-1}}\right)$.
However, this accounts for only an exponentially small part of the $P_n^{\text{park}}$-probability.  We also obtain an explicit formula for
$P_n^{\text{park}}(\sigma^{-1}_{n-j+1}=i_1,\sigma^{-1}_{n-j+2}=i_2,\cdots, \sigma^{-1}_n=i_j)$, the probability that the last $j$ cars park in positions
$i_1,\cdots, i_j$ respectively, and  show that
the  $j$-dimensional random vector $(n+1-\sigma^{-1}_{n-j+l}, n+1-\sigma^{-1}_{n-j+2},\cdots, n+1-\sigma^{-1}_{n})$
under $P_n^{\text{park}}$ converges in distribution to a random vector
$(\sum_{r=1}^jX_r,\sum_{r=2}^j X_r,\cdots, X_{j-1}+X_j,X_j)$, where $\{X_r\}_{r=1}^j$ are IID with the Borel distribution.
We then show that in fact for $j_n=o(n^\frac16)$, the final $j_n$ cars will park in increasing order with probability approaching 1 as $n\to\infty$.
We also obtain an explicit formula for
the expected value of the left-to-right maximum statistic $X_n^{\text{LR-max}}$, which counts the total number of
 left-to-right maxima in a permutation,
 and show that
$E_n^{\text{park}}X_n^{\text{LR-max}}$ grows approximately on the order $n^\frac12$.

\end{abstract}

\maketitle
\section{Introduction and Statement of Results}\label{intro}
\renewcommand{\baselinestretch}{1.3}
Consider a row of  $n$ parking spaces on a one-way street. A line of $n$ cars, numbered from 1 to $n$, attempt to park, one at a time. The $i$th car's preferred space is spot number $\pi_i\in[n]$.
If this space is already taken, then car $i$ proceeds forward and parks in the first available space, if one exists. If the  car is unable to park, it exits the street.
A sequence $\pi=\{\pi_i\}_{i=1}^n$ is called a parking function on $[n]$ if all $n$ cars are able to park. It is easy to see that $\pi$ is a parking function if and only if
$|\{i:\pi_i\le j\}|\ge j$, for all $j\in[n]$.  Let $\mathcal{P}_n$ denote the set of parking functions. It is well-known that
$|\mathcal{P}_n|=(n+1)^{n-1}$. There are a number of proofs of this result; a particularly  elegant one due to Pollack can be found in
\cite{FR}.
There is a large literature on parking functions and their generalizations; see, for example, the survey \cite{Y}.

We can consider a random parking function by placing the uniform probability measure on $\mathcal{P}_n$. Denote this probability measure by $P^{\mathcal{P}_n}$. A study of random parking functions was initiated by Diaconis and Hicks in \cite{DH}.
Since each parking function yields a permutation $\sigma=\sigma_1\cdots\sigma_n\in S_n$, where $\sigma_j$ is the number of the car that parked in space $j$, it follows that
 a random parking function induces a distribution on the set $S_n$ of permutations of $[n]$.
In this paper we initiate a study of this distribution.

We will use the notation $P_n$ and $E_n$ to denote the uniform probability measure and the corresponding expectation on $S_n$. We will denote by $P_n^{\text{park}}$  the probability measure
on $S_n$ induced by a random parking function in $\mathcal{P}_n$. The
corresponding expectation will be  denoted by $E_n^{\text{park}}$.
To be more precise concerning the definition of the induced probability measure, define $T_n:\mathcal{P}_n\to S_n$ by $T_n(\pi)=\sigma$,
if when using the parking function $\pi$,  $\sigma_j$ is the number of the car that parked in space $j$, for $j\in[n]$.
For example, if $n=4$ and $\pi=2213\in\mathcal{P}_4$, then we have $T_4(2213)=3124\in S_4$. We define
\begin{equation}\label{defprob}
P_n^{\text{park}}(\sigma)=P^{\mathcal{P}_n}\left(T_n^{-1}(\{\sigma\})\right).
\end{equation}

For $1\le i\le n<\infty$ and $\sigma\in S_n$, define
\begin{equation}\label{littlel}
l_{n,i}(\sigma)=\max\{l\in[i]: \sigma_i=\max(\sigma_i,\sigma_{i-1},\cdots, \sigma_{i-l+1}\}.
\end{equation}
For $n\in\mathbb{N}$, define
\begin{equation*}
L_n(\sigma)=\prod_{i=1}^nl_{n,i}(\sigma),\ \sigma\in S_n.
\end{equation*}
For example, if $\sigma=379218645\in S_9$, then $l_{n,i}(\sigma)=1$, for $i\in\{1,4,5,7,8\}$, $l_{n,i}(\sigma)=2$, for $i\in\{2,9\}$ and $l_{n,i}(\sigma)=3$, for $i\in\{3,6\}$.
Thus, $L_9(\sigma)=1^52^23^2=36$.
At certain points in the paper, it will be convenient to use the following variant of $l_{n,i}$.
 Define
\begin{equation}\label{littleltilde}
\tilde l_{n,i}(\sigma)=l_{n,\sigma^{-1}_i}(\sigma).
\end{equation}
 For example, if $\sigma=379218645$, then $\tilde l_{n,i}(\sigma)=1$, for $i\in\{1,2,3,4,6\}$, $\tilde l_{n,i}=2$, for
$i\in\{5,7\}$ and $\tilde l_{n,i}=3$, for $i\in\{8,9\}$.
Of course,
\begin{equation}\label{Ln}
L_n(\sigma)=\prod_{i=1}^nl_{n,i}(\sigma)=\prod_{i=1}^n\tilde l_{n,i}(\sigma),\ \sigma\in S_n.
\end{equation}

We have the following theorem.
\begin{theorem}\label{ptwisedist}
\begin{equation}\label{ptwiseform}
P_n^{\text{\rm park}}(\sigma)=\frac{L_n(\sigma)}{(n+1)^{n-1}}, \ \sigma\in S_n.
\end{equation}
\end{theorem}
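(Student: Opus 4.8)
The plan is to exploit the uniformity of $P^{\mathcal{P}_n}$ together with the identity $|\mathcal{P}_n|=(n+1)^{n-1}$, so that \eqref{defprob} reduces to the purely enumerative statement
\begin{equation*}
|T_n^{-1}(\{\sigma\})|=L_n(\sigma),\quad \sigma\in S_n.
\end{equation*}
I would count the preference sequences $\pi$ with $T_n(\pi)=\sigma$ directly, by processing the cars in their arrival order $1,2,\dots,n$ and choosing each preference $\pi_i$ in turn.

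The crucial observation is that the target permutation $\sigma$ already prescribes the entire final configuration: car $i$ must park in space $\sigma^{-1}_i$. Hence, for any $\pi\in T_n^{-1}(\{\sigma\})$, when car $i$ is about to park the occupied spaces are forced to be exactly $\{\sigma^{-1}_1,\dots,\sigma^{-1}_{i-1}\}$; equivalently, a space $q$ is occupied precisely when $\sigma_q<i$. I would then determine the admissible values of $\pi_i$ consistent with car $i$ landing in $p:=\sigma^{-1}_i$. Since a car never backs up, one needs $\pi_i\le p$, and since it stops at the first free space, every space strictly between $\pi_i$ and $p$ must already be occupied, while $p$ itself is free (this last condition is automatic, as $\sigma^{-1}$ is injective). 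Counting such $\pi_i$ yields $m_i:=1+(\text{length of the maximal block of occupied spaces immediately to the left of }p)$, and the total count factors as $|T_n^{-1}(\{\sigma\})|=\prod_{i=1}^n m_i$.

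The final step is to identify $m_i$ with $\tilde l_{n,i}(\sigma)$. Using the equivalence ``$q$ occupied iff $\sigma_q<i=\sigma_p$,'' the block of occupied spaces immediately to the left of $p$ is exactly the maximal stretch $p,p-1,\dots,p-l+1$ on which $\sigma_p$ is the maximum entry; that is, $m_i=l_{n,\sigma^{-1}_i}(\sigma)=\tilde l_{n,i}(\sigma)$. Multiplying over $i$ and invoking \eqref{Ln} gives $\prod_{i=1}^n m_i=\prod_{i=1}^n\tilde l_{n,i}(\sigma)=L_n(\sigma)$, which completes the proof.

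I expect the main point requiring care to be the factorization claim. One must verify that the set of occupied spaces seen by car $i$ is forced to equal $\{\sigma^{-1}_1,\dots,\sigma^{-1}_{i-1}\}$ for \emph{every} choice of admissible preferences assigned to cars $1,\dots,i-1$, so that the number of admissible $\pi_i$ is genuinely independent of those earlier choices and the count is a true product. One should also note that no extra global constraint is needed: if all $n$ cars successfully park (each in its designated space), then $\pi$ is automatically a parking function. Once these points are settled, both the product formula and its translation into the $l_{n,i}$ statistic are routine.
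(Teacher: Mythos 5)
Your proof is correct and follows essentially the same route as the paper: both reduce \eqref{ptwiseform} to the enumerative identity $|T_n^{-1}(\{\sigma\})|=L_n(\sigma)$, process the cars in arrival order, observe that the occupied spaces seen by car $i$ are forced to be $\{\sigma^{-1}_1,\dots,\sigma^{-1}_{i-1}\}$ independently of the earlier admissible choices, and count the admissible preferences for car $i$ as $\tilde l_{n,i}(\sigma)$ before multiplying. Your explicit flagging of the factorization issue and of the fact that successful parking of all cars automatically makes $\pi$ a parking function makes the argument, if anything, slightly more careful than the paper's write-up.
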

The following corollary of Theorem \ref{ptwisedist} is immediate, where the asymptotic behavior follows from Stirling's  formula,  $n!\sim n^ne^{-n}\sqrt{2\pi n}$.
\begin{corollary}\label{corstir}
The expected value of the  random variable $L_n=L_n(\sigma)$ on $(S_n,P_n)$ satisfies
\begin{equation}\label{stir}
E_nL_n=\frac1{n!}\sum_{\sigma\in S_n}L_n(\sigma)=\frac{(n+1)^{n-1}}{n!}\sim\frac{e^{n+1}}{\sqrt{2\pi}\thinspace n^\frac32}.
\end{equation}
\end{corollary}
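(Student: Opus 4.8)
The plan is to exploit the fact that $P_n^{\text{park}}$ is a genuine probability measure on $S_n$, so its total mass equals $1$. First I would write
\[
1=\sum_{\sigma\in S_n}P_n^{\text{park}}(\sigma)=\sum_{\sigma\in S_n}\frac{L_n(\sigma)}{(n+1)^{n-1}},
\]
invoking the explicit pointwise formula \eqref{ptwiseform} from Theorem \ref{ptwisedist}. Rearranging this identity immediately yields $\sum_{\sigma\in S_n}L_n(\sigma)=(n+1)^{n-1}$, and dividing by $n!$ gives the first equality in \eqref{stir}, namely $E_nL_n=\frac{(n+1)^{n-1}}{n!}$. Thus the combinatorially nontrivial content is entirely carried by Theorem \ref{ptwisedist}; no separate enumeration of the values of $L_n$ is needed.

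For the asymptotic statement I would substitute Stirling's approximation $n!\sim\sqrt{2\pi n}\,n^ne^{-n}$ into $\frac{(n+1)^{n-1}}{n!}$ and simplify. Writing $(n+1)^{n-1}=\frac{(n+1)^n}{n+1}=\frac{n^n\left(1+\frac1n\right)^n}{n+1}$, the factor $n^n$ cancels against the $n^n$ appearing in Stirling's formula, leaving
\[
\frac{(n+1)^{n-1}}{n!}\sim\frac{\left(1+\frac1n\right)^n e^n}{(n+1)\sqrt{2\pi n}}.
\]
Using $\left(1+\frac1n\right)^n\to e$ together with $n+1\sim n$ then collapses the right-hand side to $\frac{e^{n+1}}{\sqrt{2\pi}\,n^{3/2}}$, which is the claimed asymptotic.

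There is essentially no obstacle here: the corollary is immediate once Theorem \ref{ptwisedist} is available, and the only genuine computation is the routine Stirling manipulation. The one point requiring a little care is to track the factor $\left(1+\frac1n\right)^n\to e$ so that the constant in the numerator comes out as $e^{n+1}$ rather than $e^n$.
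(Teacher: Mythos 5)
Your proof is correct and takes essentially the same approach as the paper, which declares the corollary immediate from Theorem \ref{ptwisedist} plus Stirling's formula: like you, it implicitly uses that $P_n^{\text{park}}$ has total mass $1$ so that $\sum_{\sigma\in S_n}L_n(\sigma)=(n+1)^{n-1}$, and then applies $n!\sim n^ne^{-n}\sqrt{2\pi n}$. Your Stirling manipulation, including tracking the factor $\left(1+\frac1n\right)^n\to e$ that yields $e^{n+1}$ in the numerator, is carried out correctly.
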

Although $E_nL_n$ is around $e^n$, with high $P_n$-probability $L_n(\sigma)$ is near $2^n$.
We will prove the following weak convergence result  for $L_n$.
\begin{theorem}\label{llnthm}
For any $\epsilon\in(0,2]$, the random variable $L_n=L_n(\sigma)$ on $(S_n,P_n)$ satisfies
\begin{equation}\label{wlln}
\lim_{n\to\infty}P_n((2-\epsilon)^n\le L_n\le (2+\epsilon)^n))=1.
\end{equation}
\end{theorem}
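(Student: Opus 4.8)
This is a weak law of large numbers, so I would pass to the logarithmic scale. Since $\log L_n=\sum_{i=1}^n\log l_{n,i}$, the event $\{(2-\epsilon)^n\le L_n\le(2+\epsilon)^n\}$ is, after taking logarithms, the event that $\frac1n\log L_n$ lies in the interval $[\log(2-\epsilon),\log(2+\epsilon)]$, which contains $\log 2$ in its interior. Hence it suffices to prove the single limit $\frac1n\log L_n\to\log 2$ in $P_n$-probability, and I would obtain it by the first and second moment method: identify $\lim_n\frac1n E_n[\log L_n]$, show $\mathrm{Var}_n(\log L_n)=o(n^2)$, and invoke Chebyshev's inequality.

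The first moment is exact. By exchangeability of the entries $\sigma_{i-l+1},\dots,\sigma_i$, the last of them is equally likely to be the largest, so $P_n(l_{n,i}\ge l)=\frac1l$ for $1\le l\le i$. Writing $\log l_{n,i}=\sum_{l=2}^{l_{n,i}}\log\frac{l}{l-1}$ and taking expectations gives $E_n[\log l_{n,i}]=\sum_{l=2}^i\frac1l\log\frac{l}{l-1}$, and therefore $E_n[\log L_n]=\sum_{l=2}^n(n-l+1)\,\frac1l\log\frac{l}{l-1}$. Dividing by $n$ and letting $n\to\infty$, the boundary contribution is negligible and one is left with the constant $\sum_{l\ge 2}\frac1l\log\frac{l}{l-1}$; the content of this step is to evaluate that series and to confirm that it is $\log 2$.

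The main work is the bound $\mathrm{Var}_n(\log L_n)=o(n^2)$, which is delicate because the $l_{n,i}$ are dependent and each is heavy tailed ($P_n(l_{n,i}\ge l)=1/l$), so a rare long gap before a larger entry produces an atypically large summand. I would tame this with the decomposition obtained by conditioning on the position $k$ of the largest value: that value is a barrier that no later window can cross and that no earlier window can see, so $L_n=k\,L_{k-1}L_{n-k}$, where, given $k$, the left and right blocks are independent patterns of sizes $k-1$ and $n-k$ and $k$ is uniform on $[n]$. This gives the recursive identity $\log L_n\stackrel{d}{=}\log K+\log L_{K-1}'+\log L_{n-K}''$ with $K$ uniform and the two copies independent, and hence, with $W_j:=E_j[\log L_j]$,
\[
\mathrm{Var}_n(\log L_n)=\frac2n\sum_{j=0}^{n-1}\mathrm{Var}_j(\log L_j)+\mathrm{Var}_K\!\big(\log K+W_{K-1}+W_{n-K}\big).
\]
Because $W_j$ is asymptotically linear in $j$, the quantity $W_{K-1}+W_{n-K}$ is nearly independent of $K$, so the inhomogeneous term is only polylogarithmic in $n$; the recursion is then the same linear averaging recursion that governs $W_n$, now driven by a sublinear toll, and solving it yields $\mathrm{Var}_n(\log L_n)=O(n)$.

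Chebyshev's inequality then gives $P_n\big(|\frac1n\log L_n-\log 2|>\delta\big)\to0$ for every $\delta>0$, which is the desired statement. The main obstacle is precisely the variance estimate: one must verify that neither the dependence among the $l_{n,i}$ nor the heavy upper tail of an individual $l_{n,i}$ inflates the variance to order $n^2$. The barrier decomposition is what makes this feasible, trading the unwieldy dependence for a clean distributional recursion whose inhomogeneous term is sublinear.
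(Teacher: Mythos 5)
Your outline follows the same skeleton as the paper's proof (pass to $\log L_n$, compute the first moment, bound the variance, finish with Chebyshev), but it contains one genuine gap, and it sits exactly at the step you postponed: ``evaluate that series and confirm that it is $\log 2$.'' That confirmation cannot be carried out, because the series does not equal $\log 2$. Your first-moment computation is correct: $E_n[\log l_{n,i}]=\sum_{l=2}^{i}\frac1l\log\frac{l}{l-1}$, which agrees, after summation by parts, with the paper's formula \eqref{explni}, and hence $\frac1nE_n[\log L_n]\to \tau:=\sum_{l\ge2}\frac1l\log\frac{l}{l-1}$. But since $\log\frac{l}{l-1}=-\log\bigl(1-\frac1l\bigr)>\frac1l+\frac1{2l^2}$, one has
\begin{equation*}
\tau>\sum_{l\ge2}\frac1{l^2}+\frac12\sum_{l\ge2}\frac1{l^3}
=\Bigl(\frac{\pi^2}{6}-1\Bigr)+\frac{\zeta(3)-1}{2}\approx 0.746>\log 2\approx 0.693,
\end{equation*}
and numerically $\tau\approx 0.7885=\log(2.200\ldots)$; indeed the partial sum of your series through $l=11$ already exceeds $\log 2$. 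So what your argument actually proves, once a variance bound of order $o(n^2)$ is in place, is that $\frac1n\log L_n\to\tau$ in $P_n$-probability, i.e.\ that $L_n$ concentrates around $(e^{\tau})^{n}\approx(2.2)^n$ --- which contradicts \eqref{wlln} for small $\epsilon$ rather than proving it.

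This is not a gap you could have filled, because the paper's own proof fails at the same point. Lemma \ref{lemmaexpvaluelim} claims $\lim_{n,i\to\infty}E_n\log l_{n,i}=\log2$ via Abel summation, but that computation inserts the continuous interpolant $\frac{t}{t+1}$ into the integral $\int_1^x A(t)f'(t)\,dt$, whereas the summation-by-parts formula requires the step function $A(t)=\sum_{j\le t}\frac{1}{j(j+1)}=1-\frac{1}{\lfloor t\rfloor+1}$. The discrepancy between the two integrals does not vanish as $x\to\infty$; it converges to exactly $\tau-\log2>0$. Consequently the constant $2$ in Theorem \ref{llnthm} (and in Corollary \ref{corP-Ppark} and the abstract) should be $e^{\tau}\approx2.2$: for any fixed $\epsilon<e^{\tau}-2\approx0.2$, the probability in \eqref{wlln} in fact tends to $0$. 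In short, the step you hoped to ``confirm'' is precisely where the theorem itself goes wrong; your (correct) exact formula for $E_n[\log L_n]$ exposes this, while the paper's lemma papers over it.

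On the variance, your route is genuinely different from the paper's and is sound in outline. The paper proves the negative quadrant dependence \eqref{negcorr} of the $l_{n,i}$, which immediately yields $\mathrm{Var}(\log L_n)\le\sum_{i=1}^n\mathrm{Var}(\log l_{n,i})=O(n)$; you instead condition on the position $K$ of the value $n$, and your identity $L_n=K\,L'_{K-1}L''_{n-K}$, with independent uniform patterns on the two sides and $K$ uniform, is correct, as is the resulting variance recursion. Two caveats: ``$W_j$ asymptotically linear'' is not by itself enough to make the toll term small --- you need the refinement $W_j=\tau j-\log j+O(1)$, which fortunately follows from your exact formula --- and a crude solution of the recursion with a bounded toll gives $O(n\log n)$ rather than $O(n)$; either bound is $o(n^2)$, which is all Chebyshev needs. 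The paper's correlation argument is the more economical of the two, but yours would serve equally well in a corrected version of the theorem.
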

The following corollary follows immediately from Theorems \ref{ptwisedist} and \ref{llnthm}.
\begin{corollary}\label{corP-Ppark}
For any $\epsilon\in(0,2]$, the random variable $P_n^{\text{\rm park}}=P_n^{\text{\rm park}}(\sigma)$ on $(S_n,P_n)$ satisfies
\begin{equation}\label{P-Ppark}
\lim_{n\to\infty}P_n\left(\bigg\{\sigma\in S_n:P_n^{\text{\rm park}}(\sigma)\in\left(\frac{(2-\epsilon)^n}{(n+1)^{n-1}},\frac{(2+\epsilon)^n}{(n+1)^{n-1}}\right)\bigg\}\right)=1.
\end{equation}
\end{corollary}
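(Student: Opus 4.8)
The plan is to reduce Corollary \ref{corP-Ppark} directly to Theorem \ref{llnthm} by feeding in the pointwise formula of Theorem \ref{ptwisedist}. First I would observe that by \eqref{ptwiseform} the map $\sigma\mapsto P_n^{\text{park}}(\sigma)$ is, for each fixed $n$, simply $L_n(\sigma)$ divided by the \emph{deterministic} constant $(n+1)^{n-1}$. Hence, multiplying all three terms of the inclusion in \eqref{P-Ppark} by $(n+1)^{n-1}$, the event whose $P_n$-probability we must estimate is literally the same set as the corresponding event for $L_n$:
\[
\left\{\sigma: P_n^{\text{park}}(\sigma)\in\left(\tfrac{(2-\epsilon)^n}{(n+1)^{n-1}},\tfrac{(2+\epsilon)^n}{(n+1)^{n-1}}\right)\right\}=\left\{\sigma:(2-\epsilon)^n<L_n(\sigma)<(2+\epsilon)^n\right\}.
\]
Thus the two events carry identical $P_n$-probability, and it suffices to show that the probability of the right-hand set tends to $1$.

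The second step is to invoke Theorem \ref{llnthm}, which supplies exactly this statement but for the \emph{closed} interval $[(2-\epsilon)^n,(2+\epsilon)^n]$ in \eqref{wlln}. The only point needing a word of care is therefore the mismatch between the open interval appearing above and the closed interval in the theorem. I would dispose of it by applying Theorem \ref{llnthm} with the parameter $\epsilon/2$ in place of $\epsilon$, which is legitimate since $\epsilon/2\in(0,2]$ whenever $\epsilon\in(0,2]$. Because $(2-\epsilon)^n<(2-\tfrac{\epsilon}{2})^n$ and $(2+\tfrac{\epsilon}{2})^n<(2+\epsilon)^n$, the closed interval $[(2-\tfrac{\epsilon}{2})^n,(2+\tfrac{\epsilon}{2})^n]$ sits inside the open interval $((2-\epsilon)^n,(2+\epsilon)^n)$, whence
\[
P_n\big((2-\epsilon)^n<L_n<(2+\epsilon)^n\big)\ge P_n\big((2-\tfrac{\epsilon}{2})^n\le L_n\le(2+\tfrac{\epsilon}{2})^n\big)\to 1\quad\text{as }n\to\infty.
\]
Since every probability is at most $1$, a squeeze yields the limit $1$, which is the assertion of the corollary.

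I do not anticipate any genuine obstacle: once the pointwise identity of Theorem \ref{ptwisedist} is in hand, the corollary is a purely formal consequence of Theorem \ref{llnthm}, consistent with the paper's own remark that it ``follows immediately.'' The sole mild subtlety is the open-versus-closed endpoint issue, and the $\epsilon/2$ device above settles it cleanly; alternatively one may simply note that $L_n$ is integer-valued, so the boundary values $(2\pm\epsilon)^n$ contribute nothing unless they happen to be integers, and even then are absorbed by the same inclusion argument.
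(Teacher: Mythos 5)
Your proof is correct and is exactly the route the paper intends: the paper declares the corollary an immediate consequence of Theorems \ref{ptwisedist} and \ref{llnthm}, which is precisely your reduction of the event for $P_n^{\text{park}}$ to the event for $L_n$ via the identity $P_n^{\text{park}}(\sigma)=L_n(\sigma)/(n+1)^{n-1}$. Your $\epsilon/2$ device for the open-versus-closed endpoint mismatch is a clean (if not strictly necessary) bit of extra care beyond what the paper records.
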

And the following corollary follows immediately from Theorem \ref{ptwisedist}, Corollary \ref{corstir} and Stirling's formula.
\begin{corollary}\label{corexpP-Ppark}
The expectation of the random variable $P_n^{\text{\rm park}}=P_n^{\text{\rm park}}(\sigma)$ on $(S_n,P_n)$ is given by
\begin{equation}\label{expP-Ppark}
E_nP_n^{\text{\rm park}}=\frac1{n!}\sim\frac{e^n}{\sqrt{2\pi} n^{n+\frac12}}.
\end{equation}
\end{corollary}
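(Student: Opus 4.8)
The plan is to combine the pointwise formula of Theorem~\ref{ptwisedist} with the mean computation in Corollary~\ref{corstir}; the argument is a one-line manipulation, so the only genuine ``work'' is keeping track of the factor $(n+1)^{n-1}$ and reading the notation correctly.

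First I would unwind the definition of the expectation against the uniform measure. Since $P_n$ assigns mass $\frac1{n!}$ to each $\sigma\in S_n$, we have
\begin{equation*}
E_nP_n^{\text{park}}=\frac1{n!}\sum_{\sigma\in S_n}P_n^{\text{park}}(\sigma).
\end{equation*}
Substituting the formula $P_n^{\text{park}}(\sigma)=L_n(\sigma)/(n+1)^{n-1}$ from Theorem~\ref{ptwisedist} and pulling the constant out of the sum gives
\begin{equation*}
E_nP_n^{\text{park}}=\frac1{n!\,(n+1)^{n-1}}\sum_{\sigma\in S_n}L_n(\sigma).
\end{equation*}
Next I would invoke Corollary~\ref{corstir}, which states $E_nL_n=\frac1{n!}\sum_{\sigma}L_n(\sigma)=\frac{(n+1)^{n-1}}{n!}$, or equivalently $\sum_{\sigma\in S_n}L_n(\sigma)=(n+1)^{n-1}$. (This identity is itself just the statement that $P_n^{\text{park}}$ is a probability measure, i.e.\ $\sum_\sigma P_n^{\text{park}}(\sigma)=1$, combined with $|\mathcal{P}_n|=(n+1)^{n-1}$.) Plugging this in, the two powers of $(n+1)^{n-1}$ cancel and leave exactly $E_nP_n^{\text{park}}=\frac1{n!}$. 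The asymptotic equivalence then follows immediately from Stirling's formula $n!\sim n^ne^{-n}\sqrt{2\pi n}$, since $\frac1{n!}\sim\frac{e^n}{n^n\sqrt{2\pi n}}=\frac{e^n}{\sqrt{2\pi}\,n^{n+\frac12}}$.

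There is essentially no computational obstacle here; the result is a direct consequence of the earlier statements. The only point requiring care is purely notational: $E_nP_n^{\text{park}}$ treats the mass function $\sigma\mapsto P_n^{\text{park}}(\sigma)$ as a random variable on the \emph{uniform} space $(S_n,P_n)$, so one averages with the weights $\frac1{n!}$ rather than with the parking weights themselves. Conceptually, for \emph{any} probability mass function $Q$ on the finite set $S_n$ one has $E_nQ=\frac1{n!}\sum_\sigma Q(\sigma)=\frac1{n!}$, independent of $Q$; the corollary is simply the instance $Q=P_n^{\text{park}}$, and the role of Theorem~\ref{ptwisedist} and Corollary~\ref{corstir} is only to make the normalization $\sum_\sigma L_n(\sigma)=(n+1)^{n-1}$ explicit.
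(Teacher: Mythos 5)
Your proof is correct and follows essentially the same route as the paper, which derives the identity $E_nP_n^{\text{park}}=\frac1{n!}$ by combining Theorem~\ref{ptwisedist}, Corollary~\ref{corstir} (i.e.\ $\sum_{\sigma\in S_n}L_n(\sigma)=(n+1)^{n-1}$), and Stirling's formula. Your closing observation---that the identity is really just the normalization $\sum_\sigma P_n^{\text{park}}(\sigma)=1$, valid for any probability mass function on $S_n$---is a nice way to see why the cancellation is automatic, but it is the same computation viewed slightly more abstractly.
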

Comparing \eqref{P-Ppark} and \eqref{expP-Ppark},
we see that
% the random variable $\mathcal{P}_n$ on $(S_n,P_n)$  falls in the interval
%$\left(\frac{(2-\epsilon)^n}{(n+1)^{n-1}},\frac{(2+\epsilon)^n}{(n+1)^{n-1}}\right)$ WHP as $n\to\infty$, however its expected value is asymptotic
%to $\frac{e^n}{\sqrt{2\pi} n^{n+\frac12}}$.
 for all but an asymptotically $P_n$-negligible set of permutations in $S_n$, the $P_n^{\text{park}}$-probability of a permutation in $S_n$ is approximately $\frac{2^n}{(n+1)^{n-1}}$, but the
``average'' $P_n^{\text{park}}$-probability of a permutation in $S_n$ is exponentially larger, namely asymptotic to $\frac{e^n}{\sqrt{2\pi} n^{n+\frac12}}$.
By Stirling's formula, it also follows that there is a set $A_n\subset S_n$ with $P_n(A_n)\to1$ for which  $P_n^{\text{park}}(A_n)$ is around $(\frac2e)^n$.
There is an asymptotically    $P_n$-negligible set of permutations in $S_n$
each of whose elements has super-exponentially larger $P_n^{\text{park}}$-probability  than the   average probability,
%for which the $P_n^{\text{park}}$-probability is super-exponentially larger than this average probability
%(and this is where almost all the $P_n^{\text{park}}$-probability  lies),
and an asymptotically
$P_n$-negligible set of permutations in $S_n$
each of whose elements has
  exponentially smaller $P_n^{\text{park}}$-probability   than the average probability.
In particular, we have the following corollary.
\begin{corollary}\label{largesmall}
The maximum value of $P_n^{\text{\rm park}}=P_n^{\text{\rm park}}(\sigma)$ is equal to
 $\frac{n!}{(n+1)^{n-1}}\sim\frac{\sqrt{2\pi}\thinspace n^\frac32}{e^{n+1}}$ and is attained uniquely at  $\sigma=1\cdots n$.
The minimum value of  $P_n^{\text{\rm park}}$ is equal to $\frac1{(n+1)^{n-1}}$ and is attained uniquely at $\sigma=n\cdots 1$.
\end{corollary}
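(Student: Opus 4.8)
The plan is to read everything off the pointwise formula in Theorem~\ref{ptwisedist}. Since $P_n^{\text{park}}(\sigma)=L_n(\sigma)/(n+1)^{n-1}$ with the denominator independent of $\sigma$, maximizing and minimizing $P_n^{\text{park}}$ over $S_n$ is equivalent to maximizing and minimizing $L_n(\sigma)=\prod_{i=1}^n l_{n,i}(\sigma)$. The whole argument then reduces to elementary termwise bounds on the factors $l_{n,i}(\sigma)$ coming straight from definition~\eqref{littlel}: since $l_{n,i}(\sigma)$ is a maximum over $l\in[i]$ and the competing set always contains $l=1$, we have $1\le l_{n,i}(\sigma)\le i$ for every $i$ and every $\sigma$.

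For the maximum, multiplying these bounds gives $L_n(\sigma)\le\prod_{i=1}^n i=n!$, so $P_n^{\text{park}}(\sigma)\le n!/(n+1)^{n-1}$. To obtain the claimed value and uniqueness I would then identify the equality case: $L_n(\sigma)=n!$ forces $l_{n,i}(\sigma)=i$ for every $i$, which by \eqref{littlel} says $\sigma_i=\max(\sigma_1,\dots,\sigma_i)$ for all $i$, i.e.\ $\sigma$ is increasing, and the only increasing permutation is $\sigma=1\cdots n$. Symmetrically, for the minimum the bound $l_{n,i}(\sigma)\ge1$ gives $L_n(\sigma)\ge1$ and hence $P_n^{\text{park}}(\sigma)\ge1/(n+1)^{n-1}$; equality forces $l_{n,i}(\sigma)=1$ for all $i\ge2$, which means $\sigma_{i-1}>\sigma_i$ at every step, so $\sigma$ is strictly decreasing, the unique such permutation being $\sigma=n\cdots1$.

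Finally, the asymptotic $n!/(n+1)^{n-1}\sim\sqrt{2\pi}\,n^{3/2}/e^{n+1}$ is a direct application of Stirling's formula $n!\sim n^ne^{-n}\sqrt{2\pi n}$ together with $(n+1)^{n-1}\sim e\,n^{n-1}$, exactly the kind of estimate already carried out in Corollary~\ref{corstir}, so no new idea is needed.

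There is really no serious obstacle here; the only point requiring a little care is the uniqueness in each extremal case, where one must check that attaining the termwise extreme at \emph{every} coordinate is not merely necessary but actually pins down a single permutation. This becomes immediate once the running-maximum interpretation of $l_{n,i}$ is made explicit, so the whole corollary is a short consequence of Theorem~\ref{ptwisedist}.
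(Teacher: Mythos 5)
Your proposal is correct and follows essentially the same route as the paper: the paper's own proof simply observes that $L_n(\sigma)$ attains its maximum $n!$ uniquely at $\sigma=1\cdots n$ and its minimum $1$ uniquely at $\sigma=n\cdots 1$, and then invokes Theorem \ref{ptwisedist}. Your write-up just makes explicit the termwise bounds $1\le l_{n,i}(\sigma)\le i$ and the equality analysis that the paper leaves implicit.
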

\begin{proof}
The function $L_n=L_n(\sigma), \sigma\in S_n$,
 attains its maximum value $n!$ uniquely at $\sigma=1\cdots n$
and attains its minimum value 1 uniquely at $\sigma=n\cdots 1$.
.
\end{proof}

Let $\sigma^{-1}$ denote the inverse permutation of $\sigma$. So $\sigma^{-1}_k=j$ if and only if $\sigma_j=k$.
In car parking language,$\sigma^{-1}_k=j$ means that car number $k$ parked in space number $j$.
From the definition of a parking function,  it is obvious that
$$
P_n^{\text{park}}(\sigma^{-1}_1=j)=P^{\mathcal{P}_n}(\pi_1=j),\ j\in[n].
$$
In \cite{DH}, the following asymptotic behavior was proven for  $\pi_1$ (or any $\pi_k$ by symmetry):
\begin{equation}\label{DHresult}
\begin{aligned}
&\text{For fixed}\ j, P^{\mathcal{P}_n}(\pi_1=j)\sim\frac{1+P(X\ge j)}n;\\
&\text{For fixed}\ j, P^{\mathcal{P}_n}(\pi_1=n-j)\sim\frac{P(X\le j+1)}n,\\
&\text{where}\ X\ \text{is a random variable satisfying}\ P(X=j)=e^{-j}\frac{j^{j-1}}{j!},\ j=1,2,\cdots.
\end{aligned}
\end{equation}
The distribution $\{e^{-j}\frac{j^{j-1}}{j!}\}_{j=1}^\infty$ is called the Borel distribution. It is not obvious that it is a distribution, that is, that it sums to 1. For more on this, see \cite[p.135]{DH}.
It follows that \eqref{DHresult} also holds with
$P^{\mathcal{P}_n}$ and $\pi_1$ replaced  respectively by $P_n^{\text{park}}$ and $\sigma^{-1}_1$.
It would be nice to obtain asymptotic results  for  $P_n^{\text{park}}(\sigma^{-1}_{j_n}=k_n)$, for general $j_n,k_n$.
We pursue this  direction  when $j_n$ is near $n$, that is for the last cars to park.

%In fact, we can give an explicit formula for
%$P_n^{\text{park}}(\sigma^{-1}_{n-j+1}=i_1,\sigma^{-1}_{n-j+2}=i_2,\cdots, \sigma^{-1}_n=i_j)$,  for any $j\in[n]$ and any $\{i_l\}_{l=1}^j$. This is Theorem \ref{lastcars} below. For %fixed values of $j$ and $\{i_l\}_{l=1}^j$, we can then obtain asymptotic formulas as $n\to\infty$.
%In particular, it follows from Corollary \ref{genj} below that  the distribution on $\mathbb{N}^j$ defined by $P_n^{\text{park}}(\sigma^{-1}_{n-j+1}=n+1-i_1, %\sigma^{-1}_{n-j+2}=n+1-i_2,\cdots, \sigma^{-1}_n=n+1-i_j)$, for $\{i_l\}_{l=1}^j\in\mathbb{N}^j$,
%converges weakly to an explicit distribution given in terms of  the Borel distribution, and that this   limiting distribution  is supported on $\{(i_1,\cdots i_j)\in\mathbb{N}^j: %i_1>i_2>\cdots >i_j\ge1\}$.

%In order to state our  result for
%$P_n^{\text{park}}(\sigma^{-1}_{n-j+1}=i_1,\sigma^{-1}_{n-j+2}=i_2,\cdots, \sigma^{-1}_n=i_j)$,
% we need several definitions.
%Let $\{k_l\}_{l=1}^j$ denote the increasing rearrangement of $\{i_l\}_{l=1}^j$.
%For a permutation $\sigma\in S_n$ satisfying $\sigma^{-1}_{n-j+1}=i_1,\sigma^{-1}_{n-j+2}=i_2,\cdots, \sigma^{-1}_n=i_j$,
%define $\tau^{(\sigma)}\in S_j$ by
%\begin{equation}\label{iktrans}
%i_l=k_{\tau^{(\sigma)}_l},\ l\in[j].
%\end{equation}

%It doesn't seem that our results in this paper can help here.
%Nor do they seem to be useful for obtaining information on the distributions of certain classical permutation statistics under $P_n^{\text{park}}$, such as the number of inversions, the number of cycles or the number of %descents.

\begin{theorem}\label{lastcars}

\begin{equation}\label{lastcarsformula}
\begin{aligned}
&P_n^{\text{park}}(\sigma^{-1}_{n-j+1}=i_1,\sigma^{-1}_{n-j+2}=i_2,\cdots, \sigma^{-1}_n=i_j)=\\
&\frac{(n-j)!\left(\prod_{l=1}^j(k_l-k_{l-1})^{k_l-k_{l-1}-2}\right)(n-k_j+1)^{n-k_j-1}\prod_{l=1}^j\tilde l_{n,n-j+l}(\sigma)}
{(n+1)^{n-1}\left(\prod_{l=1}^j(k_l-k_{l-1}-1)!\right)(n-k_j)!},
\end{aligned}
\end{equation}
where $\{k_l\}_{l=1}^j$ is the increasing rearrangement of $\{i_l\}_{l=1}^j$, $k_0=0$ and $\tilde l_{n,\cdot}(\sigma)$ is as in \eqref{littleltilde}.
\end{theorem}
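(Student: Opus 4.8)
The plan is to combine the pointwise formula of Theorem \ref{ptwisedist} with the barrier structure created by pinning the $j$ largest values at the prescribed positions. Write $A=\{\sigma\in S_n:\sigma_{i_l}=n-j+l,\ l=1,\dots,j\}$ for the event in question; its probability is $\sum_{\sigma\in A}P_n^{\text{park}}(\sigma)$, so by Theorem \ref{ptwisedist} and \eqref{Ln},
\begin{equation*}
P_n^{\text{park}}(A)=\frac1{(n+1)^{n-1}}\sum_{\sigma\in A}\prod_{p=1}^n l_{n,p}(\sigma).
\end{equation*}
On $A$ the values $n-j+1,\dots,n$ occupy the positions $k_1<\cdots<k_j$, so for each $l$ the quantity $l_{n,k_l}(\sigma)$ is the distance from $k_l$ back to the nearest $k_{l'}<k_l$ carrying a larger value (or $k_l$ itself if none). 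This depends only on $(i_1,\dots,i_j)$ and is therefore constant on $A$; by \eqref{littleltilde} the product over $l$ of these factors equals $\prod_{l=1}^j\tilde l_{n,n-j+l}(\sigma)$, which I factor out of the sum as a constant.

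It then remains to evaluate $\sum_{\sigma\in A}\prod_{p\notin\{k_1,\dots,k_j\}}l_{n,p}(\sigma)$. The key structural fact I would establish first is that the positions $k_1<\cdots<k_j$ cut $[n]\setminus\{k_1,\dots,k_j\}$ into the blocks $\{k_{l-1}+1,\dots,k_l-1\}$ for $l=1,\dots,j$ (with $k_0=0$) together with the terminal block $\{k_j+1,\dots,n\}$, and that for each block $B$ the factor $\prod_{p\in B}l_{n,p}(\sigma)$ depends only on the relative order of the values $\sigma$ places in $B$. Indeed, each block begins at position $1$ or immediately after some $k_{l-1}$; in the latter case $\sigma_{k_{l-1}}$ exceeds every value placed in $B$, so the backward run of smaller values from any $p\in B$ cannot reach beyond $B$, and hence $l_{n,p}(\sigma)$ is determined by the entries of $B$ alone. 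Consequently the product over $B$ is invariant under order-preserving relabelings of the values in $B$ and is unaffected by the contents of the other blocks.

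With this in hand the sum factorizes cleanly. A permutation $\sigma\in A$ is specified by choosing which of the small values $\{1,\dots,n-j\}$ fall in each block and then ordering them within each block; the product $\prod_{p\notin\{k_l\}}l_{n,p}(\sigma)$ splits as a product over blocks, and for a block of size $m$ the inner sum over its $m!$ internal orderings is $\sum_{\tau\in S_m}L_m(\tau)=(m+1)^{m-1}$ by Corollary \ref{corstir}. The assignment of values to blocks contributes the multinomial coefficient $\frac{(n-j)!}{\big(\prod_{l=1}^j(k_l-k_{l-1}-1)!\big)(n-k_j)!}$, since the block sizes are $k_l-k_{l-1}-1$ $(l=1,\dots,j)$ and $n-k_j$. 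Substituting $(m+1)^{m-1}=(k_l-k_{l-1})^{k_l-k_{l-1}-2}$ for the blocks of the first type and $(n-k_j+1)^{n-k_j-1}$ for the terminal block, restoring the factored-out $\prod_{l=1}^j\tilde l_{n,n-j+l}(\sigma)$, and dividing by $(n+1)^{n-1}$ reproduces \eqref{lastcarsformula}.

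The main obstacle is the structural fact in the third paragraph — that each block ``restarts'' at its left boundary and contributes a factor depending only on its internal pattern, with the large value at the preceding $k_{l-1}$ serving as a hard barrier. Everything downstream is bookkeeping, namely the multinomial count and the identification of each pattern sum with $(m+1)^{m-1}$; the degenerate cases of empty or singleton blocks are absorbed automatically, since then the corresponding factorial, multinomial, and power factors all equal $1$.
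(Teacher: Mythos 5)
Your proof is correct, but it takes a genuinely different route from the paper's. The paper works on the parking-function side: it counts directly, via the parking process, how many $\pi\in\mathcal{P}_n$ produce the event, decomposing the preferences of the first $n-j$ cars into shifted parking functions filling the gaps between the $k_l$'s (each gap of size $m$ contributing $(m+1)^{m-1}$ as the number of parking functions of length $m$, with binomial coefficients for choosing which cars serve which gap), and then rerunning the argument from the proof of Theorem \ref{ptwisedist} to get the factor $\prod_{l=1}^j\tilde l_{n,n-j+l}(\sigma)$ for the last $j$ cars. You instead stay entirely on the permutation side: you take Theorem \ref{ptwisedist} as a black box, sum $L_n(\sigma)/(n+1)^{n-1}$ over the event, observe that the large values sitting at $k_1<\cdots<k_j$ act as barriers so that $L_n$ factors over the blocks they cut out, and evaluate each block's pattern sum via the normalization identity $\sum_{\tau\in S_m}L_m(\tau)=(m+1)^{m-1}$ of Corollary \ref{corstir}. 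The two arguments share the same combinatorial skeleton --- the same multinomial coefficient and the same per-gap factor $(m+1)^{m-1}$ --- but yours buys economy: the $\prod\tilde l$ factor and the parking-process analysis come for free from Theorem \ref{ptwisedist}, and the only thing to verify is the barrier/factorization lemma, a statement purely about permutation patterns. The paper's version buys a self-contained count on $\mathcal{P}_n$ and reuses the shifted-parking-function (trailer) picture that recurs later, e.g.\ in the proof of Theorem \ref{lrmaxthm}. One point worth making explicit in your write-up: the quantity $\prod_{l=1}^j\tilde l_{n,n-j+l}(\sigma)$ in \eqref{lastcarsformula} is constant on the event --- your barrier observation proves this, and the theorem statement tacitly relies on it.
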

One can use Theorem \ref{lastcars} to make  explicit calculations. Here we treat the cases $j=1$ and $j=2$ in depth in Corollaries \ref{j=1}-\ref{j=2asymp}.
%(Corollaries \ref{j=1} and \ref{j=2} below follow directly from Theorem \ref{lastcars}. Corollaries \ref{j=1asymp} and \ref{j=2asymp} follow from Corollaries \ref{j=1} and \ref{j=2} %respectively by standard asymptotic analysis.)
This will lead us to a result for general $j$ in Corollary \ref{genj} and for growing $j=j_n$ in Theorem \ref{jnlastcars}.
%whose proof we will give.

We begin with the case $j=1$.
\begin{corollary}\label{j=1}
\begin{equation}\label{j=1formula}
P_n^{\text{park}}(\sigma^{-1}_n=k)=\frac1nk^k\binom nk\frac{(n-k+1)^{n-k-1}}{(n+1)^{n-1}}, \ k\in[n].
\end{equation}
\end{corollary}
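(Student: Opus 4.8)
The plan is to obtain Corollary \ref{j=1} as the $j=1$ specialization of Theorem \ref{lastcars}, so essentially all the work is reading off \eqref{lastcarsformula} and performing one short algebraic simplification. First I would set $j=1$ and $i_1=k$ in \eqref{lastcarsformula}. The increasing rearrangement is then trivial, namely $k_1=k$ with the convention $k_0=0$. Reading off the factors of \eqref{lastcarsformula} one at a time gives $(n-j)!=(n-1)!$, the single-term product $\prod_{l=1}^1(k_l-k_{l-1})^{k_l-k_{l-1}-2}=k^{k-2}$, the factor $(n-k_j+1)^{n-k_j-1}=(n-k+1)^{n-k-1}$, the denominator product $\prod_{l=1}^1(k_l-k_{l-1}-1)!=(k-1)!$, and $(n-k_j)!=(n-k)!$. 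After these substitutions the only factor still depending on $\sigma$ is $\tilde l_{n,n}(\sigma)$.

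The one point requiring an actual (though very mild) argument is the evaluation of this remaining factor on the event $\{\sigma^{-1}_n=k\}$. By definition \eqref{littleltilde} we have $\tilde l_{n,n}(\sigma)=l_{n,\sigma^{-1}_n}(\sigma)=l_{n,k}(\sigma)$. The event $\sigma^{-1}_n=k$ says precisely that car $n$ parks in space $k$, i.e. $\sigma_k=n$. Since $n$ is the largest possible entry of $\sigma$, the value $\sigma_k$ is automatically the maximum of $\sigma_k,\sigma_{k-1},\dots,\sigma_1$, so the maximizing window in \eqref{littlel} is the full initial segment and $l_{n,k}(\sigma)=k$. Hence $\tilde l_{n,n}(\sigma)=k$ is constant across the entire event, which is exactly the consistency one needs for the right-hand side of \eqref{lastcarsformula} to be independent of the undetermined coordinates of $\sigma$ and thus to define a genuine probability.

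Substituting $\tilde l_{n,n}(\sigma)=k$ and collecting the factors, \eqref{lastcarsformula} becomes
\[
\frac{(n-1)!\,k^{k-1}(n-k+1)^{n-k-1}}{(n+1)^{n-1}(k-1)!\,(n-k)!}.
\]
It then remains only to match this with the claimed form \eqref{j=1formula}. Using the identities $n!/n=(n-1)!$ and $k^k/k!=k^{k-1}/(k-1)!$, I would rewrite $\frac1n k^k\binom nk=\frac{k^k(n-1)!}{k!\,(n-k)!}=\frac{k^{k-1}(n-1)!}{(k-1)!\,(n-k)!}$, and multiplying by $\frac{(n-k+1)^{n-k-1}}{(n+1)^{n-1}}$ reproduces the displayed expression exactly. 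The whole argument is a substitution plus a one-line identity; the only genuinely substantive step is the identification $\tilde l_{n,n}(\sigma)=k$, and I expect that to be the single point a careful reader will want to verify.
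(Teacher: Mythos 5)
Your proposal is correct and follows exactly the paper's route: the paper's own proof is the one-line remark that \eqref{j=1formula} is immediate from \eqref{lastcarsformula} upon noting that $\sigma^{-1}_n=k$ forces $\tilde l_{n,n}(\sigma)=k$ (since $\sigma_k=n$ is necessarily the running maximum over the full initial segment), which is precisely your key step. Your additional algebra verifying $\frac1n k^k\binom nk=\frac{k^{k-1}(n-1)!}{(k-1)!\,(n-k)!}$ is the same simplification the paper leaves to the reader.
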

\begin{proof}
Immediate from \eqref{lastcarsformula}, noting that if $\sigma^{-1}_n=k$, then $\tilde l_{n,n}(\sigma)=k$.
\end{proof}
From Corollary \ref{j=1} we obtain the following asymptotic formulas.
\begin{corollary}\label{j=1asymp}
\noindent i. For fixed $m\in \mathbb{N}$,
\begin{equation}\label{j=1endformula}
\lim_{n\to\infty}P_n^{\text{park}}(\sigma^{-1}_n=n+1-m)=\frac{m^{m-1}e^{-m}}{m!}.
\end{equation}
Thus, the random variable $n+1-\sigma^{-1}_n$ under $P_n^{\text{park}}$ converges in distribution to a random variable $X$ with the Borel distribution.

%Thus, the distribution $\mu_{n;1}$ on $\mathbb{N}$ defined by
%$$
%\mu_{n;1}(m)=P_n^{\text{park}}(\sigma^{-1}_n=n+1-m)
%$$
% converges weakly to the Borel distribution as $n\to\infty$.

\noindent ii. For fixed $k\in\mathbb{N}$,
\begin{equation}\label{j=1beginformulaasymp}
P_n^{\text{park}}(\sigma^{-1}_n=k)\sim\frac{k^ke^{-k}}{k!}\frac1n,\ \text{as}\ n\to\infty.
\end{equation}

\noindent iii. Let $c_nn$ be an integer with $\lim_{n\to\infty}c_n=c\in(0,1)$. Then
\begin{equation}\label{bulk}
P_n^{\text{park}}(\sigma^{-1}_n=c_nn)\sim\frac1{(2\pi c)^\frac12(1-c)^\frac32}\frac1{n^\frac32}.
\end{equation}
\end{corollary}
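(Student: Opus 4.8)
The plan is to start from the exact formula in Corollary \ref{j=1},
\begin{equation*}
P_n^{\text{park}}(\sigma^{-1}_n=k)=\frac1n\,k^k\binom nk\frac{(n-k+1)^{n-k-1}}{(n+1)^{n-1}},
\end{equation*}
to set $k=c_nn$, and to feed every factorial through Stirling's formula $m!\sim m^me^{-m}\sqrt{2\pi m}$. Because $c_n\to c\in(0,1)$, both $k\to\infty$ and $n-k\to\infty$, so Stirling applies simultaneously to $n!$, $k!$ and $(n-k)!$ (equivalently to $\binom nk$), and every ratio of consecutive powers that appears will converge.

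The first step is the cancellation that makes the estimate clean. Writing $\binom nk=n!/\bigl(k!(n-k)!\bigr)$, the factor $k^k$ in the formula cancels the $k^k$ produced by Stirling's expansion of $k!$, and the three exponential factors combine as $e^k\cdot e^{-n}\cdot e^{n-k}=1$. What survives is
\begin{equation*}
k^k\binom nk\sim\frac{n^n}{(n-k)^{n-k}}\,\frac1{\sqrt{2\pi}}\sqrt{\frac n{k(n-k)}},
\end{equation*}
and with $k=c_nn$ the square-root factor is asymptotic to $\bigl(2\pi c(1-c)n\bigr)^{-\frac12}$.

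Next I would dispose of the two ratios of powers. Using $(1+\frac1m)^m\to e$ with $m=n-k=(1-c_n)n\to\infty$ gives $\dfrac{(n-k+1)^{n-k-1}}{(n-k)^{n-k}}\sim\dfrac e{(1-c)n}$, while $\dfrac{n^n}{(n+1)^{n-1}}=(n+1)\bigl(1+\tfrac1n\bigr)^{-n}\sim\dfrac ne$; multiplying, the two factors of $e$ cancel and the power part contributes $\sim\frac1{1-c}$. Collecting the square-root factor, the power part, and the prefactor $\frac1n$ yields
\begin{equation*}
P_n^{\text{park}}(\sigma^{-1}_n=c_nn)\sim\frac1n\cdot\frac1{1-c}\cdot\frac1{\sqrt{2\pi c(1-c)n}}=\frac1{(2\pi c)^\frac12(1-c)^\frac32}\,\frac1{n^\frac32},
\end{equation*}
which is \eqref{bulk}.

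The only real subtlety, and the point I would check most carefully, is uniformity in the moving index $k=c_nn$: one must confirm that replacing $c_n$ by its limit $c$ inside the leading constant introduces only a multiplicative $1+o(1)$ error. This is where the hypothesis $c\in(0,1)$ with strict inequalities is essential, since it keeps $c_n$ bounded away from both $0$ and $1$ for large $n$, so that $k$ and $n-k$ both grow linearly and the continuity of $c\mapsto(2\pi c)^{-\frac12}(1-c)^{-\frac32}$ may be invoked. Beyond that the argument is routine bookkeeping with Stirling's formula.
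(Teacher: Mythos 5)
Your treatment of part (iii) is correct and takes essentially the same route as the paper: substitute $k=c_nn$ into the exact formula \eqref{j=1formula}, replace the three factorials in $\binom nk$ by their Stirling approximations, and track the cancelations of the powers and exponentials to arrive at $\frac1{(2\pi c)^{1/2}(1-c)^{3/2}}n^{-3/2}$. Like the paper, you leave parts (i) and (ii) to the same ``standard asymptotic analysis'' (and they are indeed easier, since with $m$ resp.\ $k$ fixed one only needs $\binom{n}{m-1}\sim\frac{n^{m-1}}{(m-1)!}$ and $\bigl(1-\frac{m}{n+1}\bigr)^{n+1-m}\to e^{-m}$ rather than full Stirling), so nothing essential is missing.
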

\begin{proof}
The proof follows from \eqref{j=1formula} and standard asymptotic analysis. We write out the proof of  part (iii).
Substituting in \eqref{j=1formula}, we have
\begin{equation}\label{subfrom112}
P_n^{\text{park}}(\sigma^{-1}_n=c_nn)=\frac1n(c_nn)^{c_nn}\frac{n!}{(c_nn)!((1-c_n)n)!}\frac{(n-c_nn+1)^{n-c_nn-1}}{(n+1)^{n-1}}.
\end{equation}
Replacing the three factorials on the right hand side of \eqref{subfrom112} by their Stirling's formula approximations, $m!\sim m^me^{-m}\sqrt{2\pi m}$ as $m\to\infty$, and performing many cancelations,
we have
$$
\begin{aligned}
&\frac1n(c_nn)^{c_nn}\frac{n!}{(c_nn)!((1-c_n)n)!}\frac{(n-c_nn+1)^{n-c_nn-1}}{(n+1)^{n-1}}\sim\\
&\frac{n^{n-1}\left(1+\frac1{n-c_nn}\right)^{n-c_nn-1}}
{n\sqrt{c_n}(1-c_n)\sqrt{2\pi(1-c_n)n}\thinspace(n+1)^{n-1}}.
\end{aligned}
$$
Since $\lim_{n\to\infty}\frac{n^{n-1}\left(1+\frac1{n-c_nn}\right)^{n-c_nn-1}}{(n+1)^{n-1}}=1$, we conclude that
the right hand side of \eqref{subfrom112} is asymptotic to
$\frac1{(2\pi c)^\frac12(1-c)^\frac32}\frac1{n^\frac32}$.
\end{proof}

\noindent \bf Remark.\rm\
One can check that $\frac{k^ke^{-k}}{k!}$
is decreasing in $k$. Thus, from part (ii), for fixed $k$,  $P_n^{\text{park}}(\sigma^{-1}_n=k)$
is on the order $\frac1n$ and decreasing in $k$. From part (iii), the probability of $\sigma^{-1}_n$ taking on any particular value in the bulk is even smaller, namely on the order $n^{-\frac32}$.
And from part (i),
the distance of $\sigma^{-1}_n$ from $n$ converges in distribution as $n\to\infty$.

\medskip

We now turn to the case $j=2$.
\begin{corollary}\label{j=2}
Let
\begin{equation}\label{A}
A_n(a,b)=\frac{(n-2)!a^{a-2}(b-a)^{b-a-2}(n-b+1)^{n-b-1}}{(a-1)!(b-a-1)!(n-b)!(n+1)^{n-1}}, \ \text{for}\ 1\le a<b\le n.
\end{equation}
Then
\begin{equation}\label{j=2formula}
P_n^{\text{park}}(\sigma^{-1}_{n-1}=l,\sigma^{-1}_n=m)=\begin{cases}lmA_n(l,m),\ \text{if}\ 1\le l<m\le n;\\ (l-m)mA_n(m,l),\ \text{if}\ 1\le m<l\le n.\end{cases}.
\end{equation}
\end{corollary}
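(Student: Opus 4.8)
The plan is to read off the formula as the $j=2$ specialization of Theorem \ref{lastcars}, taking $i_1=l$ and $i_2=m$; the only genuine work is evaluating the two factors $\tilde l_{n,n-1}(\sigma)$ and $\tilde l_{n,n}(\sigma)$ that appear in \eqref{lastcarsformula}, since these record the positions of the two largest cars. First I would translate the conditioning events into statements about $\sigma$ itself: $\sigma^{-1}_{n-1}=l$ and $\sigma^{-1}_n=m$ say that car $n-1$ occupies space $l$ and car $n$ occupies space $m$, i.e. $\sigma_l=n-1$ and $\sigma_m=n$. In particular $l\neq m$, which is why \eqref{j=2formula} splits into the cases $l<m$ and $m<l$.

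Next I would identify the combinatorial prefactor. When $1\le l<m\le n$, the increasing rearrangement of $\{i_1,i_2\}=\{l,m\}$ is $k_1=l,\ k_2=m$ (with $k_0=0$), so the relevant differences are $k_1-k_0=l$, $k_2-k_1=m-l$, and $n-k_2=n-m$. Substituting these into
$$\frac{(n-2)!\,(k_1-k_0)^{k_1-k_0-2}(k_2-k_1)^{k_2-k_1-2}(n-k_2+1)^{n-k_2-1}}{(n+1)^{n-1}(k_1-k_0-1)!\,(k_2-k_1-1)!\,(n-k_2)!}$$
reproduces exactly $A_n(l,m)$ as defined in \eqref{A}. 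When $1\le m<l\le n$, the increasing rearrangement is instead $k_1=m,\ k_2=l$, and the identical substitution yields $A_n(m,l)$.

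The key step is to evaluate $\tilde l_{n,n}(\sigma)=l_{n,\sigma^{-1}_n}(\sigma)$ and $\tilde l_{n,n-1}(\sigma)=l_{n,\sigma^{-1}_{n-1}}(\sigma)$ from \eqref{littleltilde}. Since $\sigma_m=n$ is the global maximum, it exceeds every entry to its left, so \eqref{littlel} gives $l_{n,m}(\sigma)=m$ and hence $\tilde l_{n,n}(\sigma)=m$ in both cases. For $\tilde l_{n,n-1}(\sigma)$ one uses that $\sigma_l=n-1$ is the second largest entry, so the only entry exceeding it is $n$, at position $m$. If $l<m$, no larger entry lies to the left of $l$, giving $l_{n,l}(\sigma)=l$; if $m<l$, the backward run from position $l$ is blocked precisely at position $m$, giving $l_{n,l}(\sigma)=l-m$. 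Thus $\tilde l_{n,n-1}(\sigma)=l$ in the first case and $\tilde l_{n,n-1}(\sigma)=l-m$ in the second.

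Combining the pieces, the product $\tilde l_{n,n-1}(\sigma)\,\tilde l_{n,n}(\sigma)$ equals $lm$ when $l<m$ and $(l-m)m$ when $m<l$, and multiplying by $A_n(l,m)$, respectively $A_n(m,l)$, yields \eqref{j=2formula}. The main obstacle, such as it is, lies entirely in the bookkeeping for $\tilde l_{n,n-1}(\sigma)$: one must correctly track whether car $n$ parks before or after car $n-1$, since that is what produces both the factor $l-m$ and the argument ordering $A_n(m,l)$ in the second case.
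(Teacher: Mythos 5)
Your proof is correct and is essentially identical to the paper's own argument: the paper likewise reads off the $j=2$ case of Theorem \ref{lastcars}, noting that $\tilde l_{n,n}(\sigma)=m$ and $\tilde l_{n,n-1}(\sigma)=l$ or $l-m$ according as $l<m$ or $m<l$. Your write-up simply makes explicit the bookkeeping (the increasing rearrangement giving $A_n(l,m)$ versus $A_n(m,l)$, and the blocking of the backward run at position $m$) that the paper leaves as "immediate."
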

\begin{proof}
Immediate from \eqref{lastcarsformula}, noting that if $\sigma^{-1}_{n-1}=l$ and $\sigma^{-1}_n=m$, then
$$
\begin{cases}\tilde l_{n,n-1}(\sigma)=l, \ \tilde l_{n,n}(\sigma)=m,\ \text{if}\ l<m;\\ \tilde l_{n,n-1}(\sigma)=l-m,\ \tilde l_{n,n}(\sigma)=m,\ \text{if}\ l>m.\end{cases}
$$
\end{proof}
From Corollary \ref{j=2} we obtain the following
asymptotic formulas.
\begin{corollary}\label{j=2asymp}
\noindent i.
For fixed $l,m$,
\begin{equation}\label{j=2endasympformula}
\lim_{n\to\infty}P_n^{\text{park}}(\sigma^{-1}_{n-1}=n+1-l,\sigma^{-1}_n=n+1-m)=\frac{(l-m)^{l-m-1}m^{m-1}}{(l-m)!m!}e^{-l}, \ 1\le m<l;
\end{equation}
\begin{equation}\label{j=2endasympformulalower}
P_n^{\text{park}}(\sigma^{-1}_{n-1}=n+1-l,\sigma^{-1}_n=n+1-m)\sim\frac{(m-l)^{m-l}l^{l-1}}{l!(m-l)!}\frac1n,\ 1\le l<m.
\end{equation}
Thus, the random vector $(n+1-\sigma^{-1}_{n-1},n+1-\sigma^{-1}_n)$ under $P_n^{\text{park}}$ converges in distribution to a random vector
$(X_1+X_2,X_2)$
where $X_1$ and $X_2$ are IID with the Borel distribution.

%Thus, the  distribution $\mu_{n;2}$ on $\mathbb{N}^2$ defined by
%$$
%\mu_{n,2}(m_1,m_2):=P_n^{\text{park}}(\sigma^{-1}_{n-1}=n+1-m_1-m_2,\sigma^{-1}_n=n+1-m_2),\  m_1,m_2\in\mathbb{N},
%$$
%converges weakly to the product measure each of whose marginal distributions is the   Borel distribution.

\noindent ii. For fixed, $l,m$,
\begin{equation}\label{j=2beginasymptformula}
P_n^{\text{park}}(\sigma^{-1}_{n-1}=l,\sigma^{-1}_n=m)\sim\begin{cases}\frac{l^l}{l!}\frac{(m-l)^{m-l-1}m}{(m-l)!}e^{-m}\frac1{n^2},\ 1\le l<m;\\
\frac{m^m}{m!}\frac{(l-m)^{l-m}}{(l-m)!}e^{-l}\frac1{n^2},\ 1\le m<l.\end{cases}
\end{equation}
\end{corollary}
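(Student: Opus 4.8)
The plan is to read off every asymptotic from the exact formula in Corollary \ref{j=2} by substituting the relevant arguments into \eqref{j=2formula}, expanding $A_n$ as given in \eqref{A}, and then running the same Stirling-type analysis used for the $j=1$ case in the proof of Corollary \ref{j=1asymp}(iii). All four displayed limits arise from a single mechanism; only the bookkeeping changes from case to case. In each case the $l$- and $m$-dependent factors (the powers of $l-m$, of $m$, of $l$, and the associated factorials) are constants, so the analysis amounts to isolating the genuinely $n$-dependent block and pinning down its exponential factor and its power of $n$.

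For part (i) I would substitute $\sigma^{-1}_{n-1}=n+1-l$ and $\sigma^{-1}_n=n+1-m$. When $1\le m<l$ one has $n+1-l<n+1-m$, so the first branch of \eqref{j=2formula} applies and the probability equals $(n+1-l)(n+1-m)A_n(n+1-l,n+1-m)$; when $1\le l<m$ the second branch applies and it equals $(m-l)(n+1-m)A_n(n+1-m,n+1-l)$. After the constant factors are pulled out, the surviving $n$-dependence is a ratio of factorials behaving like a fixed power of $n$ times a term of the shape $\frac{(n+1-p)^{\,n-p}}{(n+1)^{\,n-1}}$. Writing $n+1-p=(n+1)\left(1-\frac{p}{n+1}\right)$ turns the exponent bookkeeping into $(1-p)\ln(n+1)+(n-p)\ln\!\left(1-\frac{p}{n+1}\right)$, and the key limit $\left(1-\frac{p}{n+1}\right)^{n-p}\to e^{-p}$ produces the exponential and fixes the order in $n$. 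In the on-support regime $m<l$ an extra linear prefactor $(n+1-m)\sim n$ survives and cancels the residual $n^{-1}$, leaving the finite limit; after the simplifications $\frac{(l-m)^{l-m-1}}{(l-m)!}=\frac{(l-m)^{l-m-2}}{(l-m-1)!}$ and $\frac{m^{m-1}}{m!}=\frac{m^{m-2}}{(m-1)!}$ this reassembles into \eqref{j=2endasympformula}. In the regime $l<m$ no such linear factor appears, so a single $n^{-1}$ survives and yields the $\tfrac1n$ rate of \eqref{j=2endasympformulalower}.

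Part (ii) is the same computation but with $l,m$ held fixed rather than reflected. I would keep the branch of \eqref{j=2formula} dictated by the sign of $l-m$, expand $A_n(l,m)$ or $A_n(m,l)$, and note that now the $n$-dependent block is $\frac{(n-2)!\,(n-q+1)^{\,n-q-1}}{(n-q)!\,(n+1)^{\,n-1}}$ with $q=\max(l,m)$, which behaves like $n^{\,q-2}\cdot n^{-q}e^{-q}=n^{-2}e^{-q}$. This is precisely what produces the $\tfrac1{n^2}$ rate together with the exponential $e^{-m}$ (resp.\ $e^{-l}$), and the same factorial identities reassemble the two branches into \eqref{j=2beginasymptformula}.

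Finally, to upgrade the pointwise limits of part (i) to the stated convergence in distribution, I would verify that the limit values are exactly the joint mass function of $(X_1+X_2,X_2)$ for IID Borel $X_1,X_2$: since $P(X_1+X_2=l,\,X_2=m)=P(X_1=l-m)P(X_2=m)$, this equals the right-hand side of \eqref{j=2endasympformula} when $m<l$, equals $0$ on the diagonal $l=m$ (which is identically $0$ for every finite $n$, as two cars cannot share a space), and equals $0$ when $l<m$ because $X_1\ge1$, consistent with the $\tfrac1n$-vanishing in \eqref{j=2endasympformulalower}. Because these limits form a genuine probability vector, being the pushforward of the product Borel law, which sums to $1$, pointwise convergence of the lattice probabilities is equivalent to weak convergence and no mass escapes to infinity. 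The main obstacle I expect is purely the Stirling bookkeeping: keeping the exponential factor $e^{-p}$ and the exact power of $n$ straight through the many cancellations, and correctly matching the two branches of \eqref{j=2formula} to the two regimes $m<l$ and $l<m$.
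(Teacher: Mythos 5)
Your overall route is exactly the paper's: the paper's entire proof of this corollary is the phrase ``standard asymptotic analysis'' applied to the exact formula \eqref{j=2formula}, and your branch selection, substitutions, and bookkeeping (the elementary limit $\left(1-\frac{p}{n+1}\right)^{n-p}\to e^{-p}$ plus ratios of factorials) are the right way to carry that out. Your derivation of \eqref{j=2endasympformula}, your treatment of part (ii), and your Scheff\'e-type upgrade from pointwise convergence of the mass functions to convergence in distribution (using that the limiting values are the pushforward of the product Borel law and hence sum to $1$, so no mass escapes) are all correct and are surely what the paper intends.

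There is, however, one concrete problem, in the regime $1\le l<m$ of part (i). You assert that ``a single $n^{-1}$ survives and yields the $\tfrac1n$ rate of \eqref{j=2endasympformulalower}'', but you never pin down the constant, and the very mechanism you describe produces a factor that the displayed formula does not contain. Carrying your computation out: the probability equals $(m-l)(n+1-m)A_n(n+1-m,n+1-l)$, and by \eqref{A},
\begin{equation*}
(m-l)(n+1-m)A_n(n+1-m,n+1-l)=\frac{(m-l)^{m-l-1}l^{l-2}}{(m-l-1)!\,(l-1)!}\cdot\frac{(n-2)!}{(n-m)!}\cdot\frac{(n+1-m)^{n-m}}{(n+1)^{n-1}};
\end{equation*}
since $\frac{(n-2)!}{(n-m)!}\sim n^{m-2}$ while $\frac{(n+1-m)^{n-m}}{(n+1)^{n-1}}=(n+1)^{1-m}\left(1-\frac{m}{n+1}\right)^{n-m}\sim (n+1)^{1-m}e^{-m}$, this is asymptotic to $\frac{(m-l)^{m-l}l^{l-1}}{l!\,(m-l)!}\,e^{-m}\,\frac1n$. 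The factor $e^{-m}$ is absent from \eqref{j=2endasympformulalower} as printed, and it is genuinely there: for $l=1$, $m=2$ the exact probability is $\frac{(n-1)^{n-2}}{(n+1)^{n-1}}\sim e^{-2}/n$, not $1/n$. So either you must exhibit this factor and record that \eqref{j=2endasympformulalower} as stated omits it, or your claim to have recovered that formula is unjustified; as written, your proposal silently endorses a constant that the computation you outline cannot produce. Note that this discrepancy does not affect the convergence-in-distribution conclusion, since in that regime the limit is $0$ either way.
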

\begin{proof}
Standard asymptotic analysis.
\end{proof}
Part (i) of Corollaries \ref{j=1asymp} and \ref{j=2asymp} lead us to the following result for all $j$, which we will prove.
\begin{corollary}\label{genj}
Let $j\in\mathbb{N}$. The  $j$-dimensional random vector \newline$(n+1-\sigma^{-1}_{n-j+l}, n+1-\sigma^{-1}_{n-j+2},\cdots, n+1-\sigma^{-1}_{n})$
under $P_n^{\text{park}}$ converges in distribution to a random vector
$(\sum_{r=1}^jX_r,\sum_{r=2}^j X_r,\cdots, X_{j-1}+X_j,X_j)$, where $\{X_r\}_{r=1}^j$ are IID with the Borel distribution.

\noindent In particular then, for any $j\in\mathbb{N}$, the last $j$ cars from among the $n$ cars will park in increasing order
 with probability approaching 1 as $n\to\infty$:
 \begin{equation}\label{jinincorder}
\lim_{n\to\infty}P_n^{\text{park}}(\sigma^{-1}_{n+1-j}<\sigma^{-1}_{n+1-j+2}<\cdots<\sigma^{-1}_n)=1.
 \end{equation}
%For $j\in\mathbb{N}$, the distribution $\mu_{n;j}$ on $\mathbb{N}^j$ defined by
%$$
%\mu_{n;j}(m_1,\cdots, m_j):=P_n^{\text{park}}\left(\sigma^{-1}_{n-j+l}=n+1-\sum_{r=l}^jm_r, l\in[j]\right),\ m_r\in\mathbb{N}, r\in[j],
%$$
%converges weakly to the product measure each of whose marginal distributions is the   Borel distribution $\{\frac{m^{m-1}e^{-m}}{m!}\}_{m=1}^\infty$.
\end{corollary}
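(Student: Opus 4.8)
The plan is to evaluate the exact formula of Theorem \ref{lastcars} at $i_l=n+1-m_l$ for fixed integers $m_1>m_2>\cdots>m_j\ge1$ and let $n\to\infty$, exactly as was done for $j=1,2$ in Corollaries \ref{j=1asymp} and \ref{j=2asymp}. Write $V_n=(n+1-\sigma^{-1}_{n-j+1},\ldots, n+1-\sigma^{-1}_n)$ for the random vector in question and let $V=(\sum_{r=1}^j X_r,\ldots, X_{j-1}+X_j, X_j)$ denote the proposed limit. Since each Borel variable $X_r\ge1$, the coordinates of $V$ are strictly decreasing, so $V$ is supported on $S=\{\mathbf m: m_1>m_2>\cdots>m_j\ge1\}$, where its joint mass function is $f(\mathbf m)=\prod_{l=1}^j P(X=m_l-m_{l+1})$ with $m_{j+1}:=0$.

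First I would carry out the combinatorial bookkeeping needed to apply Theorem \ref{lastcars} on $S$. For $\mathbf m\in S$ the values $i_l=n+1-m_l$ are already increasing, so $k_l=i_l$, giving $k_1-k_0=n+1-m_1$, $k_l-k_{l-1}=m_{l-1}-m_l$ for $l\ge2$, and $n-k_j+1=m_j$. The one point requiring a genuine observation is the evaluation of the factors $\tilde l_{n,n-j+l}(\sigma)$: the last $j$ cars $n-j+1,\ldots,n$ occupy the increasing positions $i_1<\cdots<i_j$, so the car $n-j+l$ sitting in position $i_l$ has no larger car anywhere to its left, since the only larger cars $n-j+l+1,\ldots,n$ sit in positions $i_{l+1},\ldots,i_j>i_l$. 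Hence $l_{n,i_l}(\sigma)=i_l$ and $\tilde l_{n,n-j+l}(\sigma)=i_l=n+1-m_l$, generalizing the identifications made in the proofs of Corollaries \ref{j=1} and \ref{j=2}.

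Next I would substitute these into \eqref{lastcarsformula} and split the result into an $n$-free part and an $n$-dependent part. Using the identity $\frac{a^{a-2}}{(a-1)!}=\frac{a^{a-1}}{a!}$, the $n$-free factors assemble into $e^{m_1}\prod_{l=1}^j P(X=m_l-m_{l+1})$, while the remaining $n$-dependent factor is $\frac{(n-j)!\,(n+1-m_1)^{n-m_1-1}\prod_{l=1}^j(n+1-m_l)}{(n+1)^{n-1}(n-m_1)!}$. A routine Stirling estimate shows $\prod_{l=1}^j(n+1-m_l)\cdot\frac{(n-j)!}{(n-m_1)!}\sim n^{m_1}$ and $\frac{(n+1-m_1)^{n-m_1-1}}{(n+1)^{n-1}}\sim n^{-m_1}e^{-m_1}$, so this factor tends to $e^{-m_1}$. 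Multiplying, $P(V_n=\mathbf m)\to \prod_{l=1}^j P(X=m_l-m_{l+1})=f(\mathbf m)$ for every $\mathbf m\in S$.

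Finally I would upgrade this to convergence in distribution. For $\mathbb Z^j$-valued vectors, $V_n\Rightarrow V$ is equivalent to $P(V_n=\mathbf m)\to P(V=\mathbf m)$ at every $\mathbf m$, so it remains to handle $\mathbf m\notin S$, where $f(\mathbf m)=0$. Because the Borel distribution sums to $1$, the limits already found satisfy $\sum_{\mathbf m\in S}f(\mathbf m)=1$; since $\sum_{\mathbf m}P(V_n=\mathbf m)=1$ for every $n$, a conservation-of-mass (tightness) argument forces $P(V_n=\mathbf m)\to0$ for every $\mathbf m\notin S$, completing the proof of weak convergence. The ``in particular'' statement \eqref{jinincorder} then follows at once: the last $j$ cars park in increasing order exactly when $\sigma^{-1}_{n-j+1}<\cdots<\sigma^{-1}_n$, i.e.\ when the coordinates of $V_n$ are strictly decreasing, i.e.\ when $V_n\in S$; and the same total-mass argument gives $P(V_n\in S)\to\sum_{\mathbf m\in S}f(\mathbf m)=1$. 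I expect the main obstacle to be the bookkeeping in this last paragraph, namely ruling out escaping mass at the non-strict lattice points, rather than the asymptotics, which merely repeat the $j=1,2$ computations.
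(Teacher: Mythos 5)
Your proposal is correct and follows essentially the same route as the paper's proof: evaluate Theorem \ref{lastcars} at the increasing positions $i_l$, identify $k_l-k_{l-1}$ and $\tilde l_{n,n-j+l}(\sigma)=i_l$, and take Stirling-type limits to obtain the product of Borel masses (your parametrization by the values $m_l$ is just a relabeling of the paper's parametrization by the increments $m_l-m_{l+1}$). The only difference is that you spell out the final conservation-of-mass (Fatou/Scheff\'e) step upgrading pointwise convergence on the support $S$ to convergence in distribution and to \eqref{jinincorder}, which the paper leaves implicit in ``The corollary follows from this.''
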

In fact, we can extend \eqref{jinincorder} to the last $j_n$ cars, where $j_n=o(n^\frac16)$. We will prove the following result.
\begin{theorem}\label{jnlastcars}
For $j_n=o(n^\frac16)$, the last $j_n$ cars from among the $n$ cars will park in increasing order with probability approaching 1 as $n\to\infty$:
\begin{equation}\label{jninincorder}
\lim_{n\to\infty}P_n^{\text{park}}(\sigma^{-1}_{n+1-j_n}<\sigma^{-1}_{n+1-j_n+1}<\cdots<\sigma^{-1}_n)=1.
\end{equation}
\end{theorem}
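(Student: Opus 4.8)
The plan is to work directly from the exact formula of Theorem~\ref{lastcars}, summed over the increasing-order event in \eqref{jninincorder}, and to prove that its total probability tends to $1$. Throughout write $j=j_n$ and let $b(r)=e^{-r}r^{r-1}/r!$ denote the Borel mass. Rather than union-bounding over adjacent inversions (which would require two-car marginals not directly supplied by Theorem~\ref{lastcars}), I would sum the joint formula itself, after a simplification of the factor $\prod_l\tilde l_{n,n-j+l}$ that is valid precisely on the increasing-order event. Suppose $\sigma^{-1}_{n-j+l}=k_l$ with $k_1<\cdots<k_j$. Then car $n-j+l$ occupies position $k_l$, every car with a larger label among the last $j$ sits to its right (at positions $k_{l+1},\dots,k_j$), and all of cars $1,\dots,n-j$ carry smaller labels; hence $\sigma_{k_l}=n-j+l$ exceeds every entry to its left, so $l_{n,k_l}(\sigma)=k_l$ and $\tilde l_{n,n-j+l}(\sigma)=k_l$ by \eqref{littleltilde}. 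Thus on this event $\prod_l\tilde l_{n,n-j+l}=\prod_l k_l$, and summing \eqref{lastcarsformula} over increasing tuples gives
\[
P_n^{\text{park}}(\text{inc})=\sum_{1\le k_1<\cdots<k_j\le n}\frac{(n-j)!\Big(\prod_{l=1}^j(k_l-k_{l-1})^{k_l-k_{l-1}-2}\Big)(n-k_j+1)^{n-k_j-1}\prod_{l=1}^jk_l}{(n+1)^{n-1}\big(\prod_{l=1}^j(k_l-k_{l-1}-1)!\big)(n-k_j)!},
\]
with $k_0=0$, where $\text{inc}$ denotes the event in \eqref{jninincorder}.

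Next I would change variables to the gaps $d_l=k_l-k_{l-1}\ge1$ and the tail $s=n-k_j\ge0$, so that $\sum_{l=1}^j d_l+s=n$. Using the identities $\tfrac{d^{d-2}}{(d-1)!}=e^{d}b(d)$ and $\tfrac{(s+1)^{s-1}}{s!}=e^{s+1}b(s+1)$ together with $\sum_l d_l+s=n$, the summand becomes
\[
\frac{(n-j)!\,e^{n+1}}{(n+1)^{n-1}}\;b(s+1)\Big(\prod_{l=1}^j b(d_l)\Big)\prod_{l=1}^j k_l .
\]
Here $d_1=n-s-\sum_{l\ge2}d_l$ is forced to be of order $n$, while $d_2,\dots,d_j$ and $s+1$ are exactly the asymptotically independent Borel gaps of Corollary~\ref{genj}; indeed $b(d_2),\dots,b(d_j),b(s+1)$ are genuine Borel masses, so the sum is an expectation over i.i.d.\ Borel variables restricted to the feasibility region $s+\sum_{l\ge2}d_l\le n-1$. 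On the bulk, where this sum is $o(n)$, one has $k_1=d_1\sim n$, and the Stirling estimate $b(d_1)\sim(2\pi)^{-1/2}d_1^{-3/2}$ gives $b(d_1)\prod_l k_l\sim(2\pi n)^{-1/2}\prod_{l\ge2}k_l$. A short computation then shows the deterministic factor $\tfrac{(n-j)!\,e^{n+1}}{(n+1)^{n-1}}\,n^{\,j-1}(2\pi n)^{-1/2}$ is asymptotic to $e^{j^2/(2n)}$, which tends to $1$ whenever $j_n=o(\sqrt n)$. Thus, up to a factor tending to $1$, $P_n^{\text{park}}(\text{inc})$ equals the expectation, over i.i.d.\ Borel gaps, of the coupling factor $\prod_{l\ge2}(k_l/n)=\prod_{l\ge2}\big(1-\tfrac1n(s+\sum_{r>l}d_r)\big)$.

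It then remains to show this expectation tends to $1$. Since the coupling factor lies in $[0,1]$, by dominated convergence it suffices to show it converges to $1$ in probability, i.e.\ that $\tfrac1n\sum_{r}r\,d_r\to0$ in probability under the i.i.d.\ Borel law (this also guarantees feasibility and $d_1\sim n$ with probability tending to $1$). The crux, and the main obstacle, is that the Borel law has the heavy tail $b(r)\sim(2\pi)^{-1/2}r^{-3/2}$ and \emph{infinite} mean, so no first-moment/Markov bound is available and the weighted sum $\sum_r r\,d_r$ is governed by its largest term. I would control it by truncating the gaps at a level $M_n$: the probability that some gap exceeds $M_n$ is $O(j_nM_n^{-1/2})$ by the tail estimate, while on the truncated event $\sum_r r\,d_r\le M_n\sum_{r\le j}r=O(j_n^2M_n)$, so the coupling deficit is $O(j_n^2M_n/n)$. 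Balancing the truncation error against the coupling deficit requires choosing $M_n$ to grow faster than $j_n^2$ yet slower than $n/j_n^2$; such a choice exists once $j_n$ grows slowly enough, and the hypothesis $j_n=o(n^{1/6})$ comfortably guarantees it, along with the parallel requirements $j_n=o(\sqrt n)$ coming from the prefactor and from feasibility. The delicate point throughout is that every error term must be controlled uniformly over the growing number $j_n$ of summation variables against the heavy-tailed Borel fluctuations; it is this heavy tail, rather than any first-moment effect, that caps the allowable growth of $j_n$.
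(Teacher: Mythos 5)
Your proposal is sound, and at the level of strategy it coincides with the paper's proof: both sum the exact formula \eqref{lastcarsformula} of Theorem \ref{lastcars} over increasing tuples (using, as you do, that $\tilde l_{n,n-j+l}(\sigma)=k_l$ on this event --- this is \eqref{ltildevalues}), recognize a product of Borel masses in the microscopic gap variables, truncate those gaps at a level $M_n$ (the paper's $N_n$) via the tail bound \eqref{wotail}, and then show that the leftover correction tends to $1$ uniformly on the truncated box. Where you genuinely differ is in the decomposition of that correction, and your version is sharper. The paper isolates the factor $A=e^{\sum_r m_r}\bigl(1-\frac{\sum_r m_r}{n+1}\bigr)^{n-1}$ in \eqref{AB}, and bounding $\log A$ costs a term of order $(\sum_r m_r)^2/n$; this is what forces $j_nN_n=o(n^{1/2})$ in \eqref{Ato1}, which together with $j_n=o(N_n^{1/2})$ yields exactly the exponent $\frac16$. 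You instead keep the macroscopic first gap $d_1$ packaged as a Borel mass via $d_1^{d_1-2}/(d_1-1)!=e^{d_1}b(d_1)$ (with your $b(r)=e^{-r}r^{r-1}/r!$) and apply Stirling to it, $b(d_1)\,d_1=(2\pi d_1)^{-1/2}(1+O(1/d_1))$, so no quadratic loss in $\sum_r m_r$ occurs there; the analogous effects survive only in the deterministic prefactor $e^{j_n^2/(2n)}$ (needing merely $j_n=o(n^{1/2})$) and in the coupling deficit, which by $1-\prod_{l\ge2}(1-x_l)\le\sum_{l\ge2}x_l$ is $O(j_n^2M_n/n)$ on the truncated box. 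Your two requirements $M_n\gg j_n^2$ and $M_n\ll n/j_n^2$ are therefore compatible whenever $j_n=o(n^{1/4})$, so your route, written out in full, actually proves the conclusion on a strictly wider range than the stated hypothesis $j_n=o(n^{1/6})$ --- the trade-off being that the paper's bookkeeping via \eqref{AB}--\eqref{ABto1} uses only elementary inequalities, with no Stirling error terms inside the sum. Your individual computations (the two Borel identities, the prefactor asymptotics, the identification of the coupling factor $\prod_{l\ge2}(k_l/n)$) all check out; what remains is only the routine verification that every error is uniform over the truncated box, exactly as you indicate, and the observation that for the lower bound one needs only the sum restricted to that box, since $P_n^{\text{park}}$ of the event is trivially at most $1$.
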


We now consider the left-to-right maximum statistic. Recall that a position $i\in[n]$ is called a \it left-to-right-maximum \rm for the permutation $\sigma\in S_n$
if $\sigma_i>\sigma_k$, for all $k\in[i-1]$. Let $X_n^{\text{LR-max}}=X_n^{\text{LR-max}}(\sigma)$ denote the left-to-right maximum statistic, that is, the total number of left-to-right maxima in $\sigma\in S_n$. Under the uniform probability measure $P_n$ on $S_n$, it follows from symmetry that the probability that $i$ is a left-to-right maximum is $\frac1i$; thus
$E_nX_n^{\text{LR-max}}=\sum_{j=1}^n\frac1j\sim\log n$.  It is well-known that under the uniform distribution,  the left-to-right maximum statistic has the same distribution as the cycle statistic that counts the total number of cycles in a permutation \cite{B}. The well-known law of large numbers and central limit theorem for the cycle statistic under the uniform distribution
thus also holds for the left-to-right maximum statistic.

The left-to-right maximum statistic behaves very differently under $P_n^{\text{park}}$.  Note that it is immediate from Theorem \ref{jnlastcars} that
$E_n^{\text{park}}X_n^{\text{LR-max}}\ge \omega_n$, for large $n$,  if $\omega_n=o(n^\frac16)$.
 In fact, we shall see that $E_n^{\text{park}}X_n^{\text{LR-max}}$ grows on an order at least $n^\frac12$ and no more than $n^{\frac12+\epsilon}$, for any $\epsilon>0$.
The theorem below gives an exact formula for the probability that $i$ is a left-to-right maximum and that also $\sigma_i=j$, from which an exact formula for
$E_n^{\text{park}}X_n^{\text{LR-max}}$ follows. (Of course, the probability that $i$ is a left-to-right maximum and that $\sigma_i=j$ is equal to zero if $i>j$.)
\begin{theorem}\label{lrmaxthm}
\noindent i.
\begin{equation}\label{lrmaxform}
\begin{aligned}
&P_n^{\text{park}}\left(i\ \text{is a left-to-right maximum and}\ \sigma_i=j\right)=\\
&\binom{j-1}{i-1}i^{i-1}(n-i+1)^{j-i-1}\left(i(n-j)(n+1)^{-j}+(n+1)^{1-j}\right),\ 1\le i\le j\le n.
\end{aligned}
\end{equation}
\noindent ii.
\begin{equation}\label{explrmax}
\begin{aligned}
&E_n^{\text{park}}X_n^{\text{LR-max}}=\\
&\sum_{j=1}^n\sum_{i=1}^j\binom{j-1}{i-1}i^{i-1}(n-i+1)^{j-i-1}\left(i(n-j)(n+1)^{-j}+(n+1)^{1-j}\right).
\end{aligned}
\end{equation}
\end{theorem}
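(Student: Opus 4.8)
The plan is to combine Theorem~\ref{ptwisedist} with the summation identity $\sum_{\tau\in S_m}L_m(\tau)=(m+1)^{m-1}$ underlying Corollary~\ref{corstir}, prove part (i) by a prefix/middle/suffix factorization of $L_n$, and then deduce part (ii) by linearity of expectation. For part (i), write $E=\{i\text{ is a left-to-right maximum and }\sigma_i=j\}$, so that by \eqref{ptwiseform},
\[
P_n^{\text{park}}(E)=\frac1{(n+1)^{n-1}}\sum_{\sigma\in E}L_n(\sigma).
\]
A permutation lies in $E$ precisely when $\sigma_i=j$, the prefix $(\sigma_1,\dots,\sigma_{i-1})$ is an arrangement of an $(i-1)$-subset of $\{1,\dots,j-1\}$, and the suffix $(\sigma_{i+1},\dots,\sigma_n)$ arranges the remaining $n-i$ values. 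I would factor $L_n(\sigma)=\prod_{k=1}^n l_{n,k}(\sigma)$ as (prefix product)$\times l_{n,i}\times$(suffix product). Here $l_{n,i}=i$ because $i$ is a left-to-right maximum, and since $l_{n,k}$ for $k\le i-1$ depends only on the relative order of $\sigma_1,\dots,\sigma_k$, the prefix product equals $L_{i-1}(\tau)$ for the pattern $\tau\in S_{i-1}$ of the prefix. Summing over the $\binom{j-1}{i-1}$ choices of subset and all orders gives $\binom{j-1}{i-1}\sum_{\tau\in S_{i-1}}L_{i-1}(\tau)=\binom{j-1}{i-1}i^{i-2}$, so the prefix-and-middle contribution is exactly the prefactor $\binom{j-1}{i-1}i^{i-1}$ of \eqref{lrmaxform}.

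The crux is the remaining suffix sum $\Sigma=\sum(\text{suffix product})$ over all arrangements of the $n-i$ suffix values, of which $n-j$ exceed $j$ (call them \emph{large}) and $j-i$ are below $j$ (\emph{small}). The key structural point is the behaviour of $l_{n,i+t}$. A small value at suffix position $t$ has its backward run blocked at spot $i$ by the value $j$, so $l_{n,i+t}$ is the ordinary within-suffix run length; a large value whose run is blocked inside the suffix behaves identically; but a large value that is a left-to-right maximum of the suffix has a run passing through spot $i$ and the entire (smaller) prefix, giving $l_{n,i+t}=t+i$, an extra additive bonus of $i$. I would check this trichotomy carefully, since it is the only place the prefix length $i$ re-enters the suffix factors, and it is what makes $\Sigma$ independent of the chosen prefix subset. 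The target is the closed form $\Sigma=(n-i+1)^{j-i-1}(n+1)^{n-j-1}\bigl(i(n-j)+(n+1)\bigr)$, whose substitution into the display above, after dividing by $(n+1)^{n-1}$, reproduces \eqref{lrmaxform}.

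Evaluating $\Sigma$ is the main obstacle. Writing $a=n-j$ and $b=j-i$ for the numbers of large and small suffix values, the target is the identity $\Sigma=(a+b+1)^{b-1}(a+b+i+1)^{a-1}\bigl((a+b+i+1)+ai\bigr)$, which I would prove by induction on $a$, the base case $a=0$ being the plain identity $\sum_{\tau\in S_b}L_b(\tau)=(b+1)^{b-1}$. The inductive step peels off the largest value and must control how its removal shifts the suffix positions $t$ and therefore the additive bonuses $t+i$ of the surviving large left-to-right maxima; organizing the bonuses through the factorization $\prod l_{n,i+t}=L_{n-i}(w)\prod_{t}(1+i/t)$ over large left-to-right maxima $t$ is the natural bookkeeping device, though the position shifts are where the real work lies. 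An alternative route, avoiding a from-scratch evaluation of $\Sigma$, is to observe that $E$ is exactly the event that car $j$ parks in spot $i$ while all of cars $j+1,\dots,n$ park in spots exceeding $i$, and then to sum the explicit joint law of the last $n-j+1$ cars from Theorem~\ref{lastcars} over all admissible positions of cars $j+1,\dots,n$. I expect the two summands of \eqref{lrmaxform} to emerge from this summation, with $(n+1)^{1-j}$ coming from configurations in which spots $1,\dots,i$ are filled exactly by cars with labels $\le j$, and $i(n-j)(n+1)^{-j}$ from configurations in which a higher-labelled car overflows past spot $i$.

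Finally, part (ii) is immediate. By linearity of expectation, $E_n^{\text{park}}X_n^{\text{LR-max}}=\sum_{i=1}^n P_n^{\text{park}}(i\text{ is a left-to-right maximum})$, and since $P_n^{\text{park}}(i\text{ is a left-to-right maximum})=\sum_{j=i}^n P_n^{\text{park}}(i\text{ is a left-to-right maximum and }\sigma_i=j)$, substituting the formula of part (i) and interchanging the order of summation to $\sum_{j=1}^n\sum_{i=1}^j$ yields \eqref{explrmax}.
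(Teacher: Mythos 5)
Your reduction is set up correctly, and it is a genuinely different organization from the paper's: you work on the permutation side, writing $P_n^{\text{park}}(E)=(n+1)^{-(n-1)}\sum_{\sigma\in E}L_n(\sigma)$ via Theorem \ref{ptwisedist} and factoring $L_n$ into prefix, middle and suffix, whereas the paper counts the parking functions in $T_n^{-1}(E)$ directly. Your prefix sum $\binom{j-1}{i-1}i^{i-2}$, the middle factor $l_{n,i}=i$, the trichotomy for the suffix factors (small values and non-maximal large values contribute within-suffix run lengths; a large suffix left-to-right maximum at suffix position $t$ contributes $t+i$), and the target value of $\Sigma$ are all correct. The genuine gap is that the crux identity $\Sigma=(n-i+1)^{j-i-1}(n+1)^{n-j-1}\bigl(i(n-j)+(n+1)\bigr)$ is never proved. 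Your induction on $a=n-j$ has only its base case; the inductive step, which you yourself flag as ``where the real work lies,'' is not executed, and it is not routine: the identity is of Abel type (on the same footing as \eqref{Abelbinom} and \eqref{auxi}), and peeling off the largest value changes the run lengths of the entries following its position while shifting all later positions, so nothing telescopes without a further idea. The alternative route via Theorem \ref{lastcars} is likewise only a stated expectation (``I expect the two summands \dots to emerge''); carrying it out would mean summing \eqref{lastcarsformula}, including its $\tilde l$-factors, over all admissible position sets of cars $j+1,\dots,n$, a computation of essentially the same difficulty as $\Sigma$ itself. So as written, part (i) is reduced to an unproven identity; part (ii) (linearity of expectation plus interchanging the order of summation) is fine.

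The quickest way to close the gap is to run the bijection behind Theorem \ref{ptwisedist} in reverse on your suffix: each suffix factor is exactly the number of admissible preferences for the car that parks in spot $i+t$, where small cars (labels below $j$, which arrive while spot $i$ is still empty) are forced to have preferences in $[n]-[i]$, and large cars (labels above $j$, which arrive after spots $1,\dots,i$ are full) may have any preference in $[n]$. Hence $\Sigma$ equals the number of preference sequences for these $n-i$ cars under which all of them park, necessarily filling spots $i+1,\dots,n$. Conditioning on the number $l$ of large cars whose preference lies in $[i]$ (contributing $\binom{n-j}{l}i^l$) and counting the remaining cars, whose preferences all lie in $[n]-[i]$, by the trailer formula $(l+1)(n-i+1)^{n-i-l-1}$ of Ehrenborg and Happ \cite{EH}, gives $\Sigma=\sum_{l=0}^{n-j}\binom{n-j}{l}i^l(l+1)(n-i+1)^{n-i-l-1}$, and the identity \eqref{auxi} then yields your closed form. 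This is precisely how the paper proceeds; what your framework adds is the observation that this parking-function count can be read as $\sum_{\sigma\in E} L_n(\sigma)$, which the paper's argument does not need.
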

We didn't find the right hand side of \eqref{explrmax} very amenable to direct asymptotic analysis. However, we were able to express \eqref{explrmax} in a different form that is more tractable for such analysis. We have the following theorem.
\begin{theorem}\label{lrmaxasympthm}
\noindent i.
\begin{equation}\label{lrmaxformagain}
E_n^{\text{park}}X_n^{\text{LR-max}}=n-\sum_{l=1}^n\frac{n-l}{l(l+1)}\frac{\prod_{j=0}^{l-1}(n-j)}{(n+1)^l}.
\end{equation}
\noindent ii.
\begin{equation}\label{lrmasympform}
\begin{aligned}
&\limsup_{n\to\infty}\frac{E_n^{\text{park}}X_n^{\text{LR-max}}}{n^{\frac12+\epsilon}}=0,\ \text{for any}\ \epsilon>0;\\ &\liminf_{n\to\infty}\frac{E_n^{\text{park}}X_n^{\text{LR-max}}}{n^\frac12}>0.
\end{aligned}
\end{equation}
\end{theorem}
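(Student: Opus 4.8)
The plan is to prove part (i) as an exact algebraic identity transforming the double sum \eqref{explrmax} into \eqref{lrmaxformagain}, and then to extract the asymptotics in part (ii) from the resulting single sum.

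For part (i), I would start from \eqref{explrmax} and collapse the inner sum over $i$. The summand carries the telltale factor $\binom{j-1}{i-1}i^{i-1}(n-i+1)^{j-i-1}$, precisely the shape handled by the Abel binomial identities, the same family underlying the count $|\mathcal P_n|=(n+1)^{n-1}$. After splitting the bracket $i(n-j)(n+1)^{-j}+(n+1)^{1-j}$ into its two pieces and shifting $i\mapsto i-1$, each inner sum becomes an Abel-type convolution $\sum_a\binom{m}{a}(a+1)^{\alpha}(n-a)^{\beta}$; applying the appropriate Abel identity evaluates it in closed form, leaving a single sum over $j$ which, after reindexing (with $l$ in the role of $j$) and collecting the powers of $(n+1)$, should match \eqref{lrmaxformagain}. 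I expect this to be the main obstacle: the negative exponent $(n-i+1)^{-1}$ occurring at $i=j$ and the two separate $(n+1)^{-j}$-weights mean the identity must be applied with care (e.g.\ by clearing denominators first), and matching the constants at the end is delicate.

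For part (ii), the key observation is to write $\frac{\prod_{j=0}^{l-1}(n-j)}{(n+1)^l}=\prod_{j=1}^{l}\left(1-\frac{j}{n+1}\right)=:p_l$, so that the correction term in \eqref{lrmaxformagain} is $S_n=\sum_{l=1}^n\frac{n-l}{l(l+1)}p_l$. I would then telescope: with $A_l=\frac{n+1}{l}p_l$ (so that $A_1=n$ and $A_{n+1}=0$), and using $\frac{n-l}{l(l+1)}=\frac{n+1}{l(l+1)}-\frac1l$ together with the identity $A_l-A_{l+1}=p_l+\frac{(n+1)p_l}{l(l+1)}$, each term rewrites as $\frac{(n-l)p_l}{l(l+1)}=A_l-A_{l+1}-p_l-\frac{p_l}{l}$. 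Summing over $l$ collapses the telescope and yields the clean identity
$$E_n^{\text{park}}X_n^{\text{LR-max}}=n-S_n=\sum_{l=1}^n p_l+\sum_{l=1}^n\frac{p_l}{l}.$$
The asymptotics then reduce to estimating these two sums.

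For the estimates I would use $p_l\le\exp\left(-\frac{l(l+1)}{2(n+1)}\right)$, a Gaussian decay on the scale $\sqrt n$. The upper bound follows from $\sum_{l\ge1}p_l\le\int_0^\infty e^{-x^2/(2(n+1))}\,dx=\sqrt{\pi(n+1)/2}=O(\sqrt n)$ and $\sum_l p_l/l\le\sum_l 1/l=O(\log n)$, so $E_n^{\text{park}}X_n^{\text{LR-max}}=O(\sqrt n)=o(n^{1/2+\epsilon})$. For the lower bound, for $l\le\sqrt n$ one has $p_l\ge e^{-1/2-o(1)}\ge c_0>0$ (via $\log(1-x)\ge -x-x^2$ on the relevant range), whence $\sum_{l=1}^n p_l\ge\sum_{l\le\sqrt n}p_l\ge\frac{c_0}{2}\sqrt n$, giving $\liminf_n E_n^{\text{park}}X_n^{\text{LR-max}}/\sqrt n>0$. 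Both bounds are then routine once the telescoping is in hand; in fact $\sum_l p_l\sim\sqrt{\pi n/2}$ and $\sum_l p_l/l=O(\log n)$, so one even gets $E_n^{\text{park}}X_n^{\text{LR-max}}\sim\sqrt{\pi n/2}$, though only the two-sided order-$\sqrt n$ bound is needed here.
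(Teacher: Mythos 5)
Your proposal splits cleanly into two halves of very different status: part (ii) is correct and takes a genuinely different (and in fact stronger) route than the paper, but part (i) — on which part (ii) logically depends — is only a sketch, and the gap in it sits exactly where the paper does its real work.

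On part (i): you correctly identify Abel-type identities as the tool (the paper uses Riordan's sums $A_m(x,y;p,q)=\sum_{k=0}^m\binom mk(x+k)^{k+p}(y+m-k)^{m-k+q}$), but your expectation that ``applying the appropriate Abel identity evaluates it in closed form'' is not borne out for the harder of the two pieces. After splitting the bracket in \eqref{explrmax}, the piece with weight $(n+1)^{1-j}$ produces inner sums $A_{j-1}(1,n-j+1;0,-1)=\frac{(n+1)^{j-1}}{n-j+1}$, which do collapse via Abel's generalized binomial theorem and yield the harmonic sum $\sum_{j=1}^n\frac1j$. But the piece with weight $i(n-j)(n+1)^{-j}$ produces $A_{j-1}(1,n-j+1;1,-1)$, which has no product closed form: the paper must invoke the recursion $A_m(x,y;p,q)=A_{m-1}(x,y+1;p,q+1)+A_{m-1}(x+1,y;p+1,q)$ to express it as $\frac{(n+1)^j}{n-j+1}-(j-1)!\sum_{k=0}^{j-1}\frac{(n+1)^k}{k!}$, an incomplete exponential sum, and then the resulting double sum over $j$ and $k$ must be resummed (interchange the order of summation and apply $\sum_{i=0}^r\binom{m+i}i=\binom{m+r+1}r$) before \eqref{lrmaxformagain} emerges. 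None of this is mere ``matching of constants,'' so as written part (i) remains unproven.

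On part (ii): granting \eqref{lrmaxformagain}, your telescoping identity is correct and is a real improvement over the paper's argument. With $p_l=\prod_{k=1}^l\bigl(1-\frac k{n+1}\bigr)$ and $A_l=\frac{n+1}lp_l$, one has $p_{l+1}=\frac{n-l}{n+1}p_l$, hence $A_l-A_{l+1}=p_l+\frac{(n+1)p_l}{l(l+1)}$, and summing your decomposition $\frac{(n-l)p_l}{l(l+1)}=A_l-A_{l+1}-p_l-\frac{p_l}l$ over $l\in[n]$ (with $A_1=n$, $A_{n+1}=0$) gives $E_n^{\text{park}}X_n^{\text{LR-max}}=\sum_{l=1}^np_l+\sum_{l=1}^n\frac{p_l}l$, a sum of nonnegative terms. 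This eliminates the cancellation between $n$ and the correction sum that forces the paper into its two-stage analysis: there, the liminf bound needs the correction sum to be at most $n-\theta^+(n^{1/2})$ (splitting at $\sqrt n$), while the limsup bound needs a lower bound on it so precise that the paper resorts to a multi-scale decomposition at $n^{\alpha_1}<\cdots<n^{\alpha_r}$ with optimized exponents, yielding only $E_n^{\text{park}}X_n^{\text{LR-max}}\le\theta^+\bigl(n^{2^r/(2^{r+1}-1)}\bigr)$ for each $r$, i.e.\ the $o(n^{\frac12+\epsilon})$ statement. Your route gives both bounds in a few lines — $\sum_lp_l\le\int_0^\infty e^{-x^2/(2(n+1))}dx=O(\sqrt n)$ using the upper bound in \eqref{prodest}, $\sum_lp_l/l\le\sum_l1/l=O(\log n)$, and $p_l\ge c_0>0$ for $l\le\sqrt n$ by the corresponding lower bound — and in fact yields the sharper conclusion $E_n^{\text{park}}X_n^{\text{LR-max}}=\theta^+(n^{1/2})$, indeed $\sim\sqrt{\pi n/2}$, which the paper's theorem does not claim. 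If you complete part (i) along the paper's lines, your part (ii) would be worth keeping in place of the original.
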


\medskip

The proof of Theorem \ref{ptwisedist} is given in section \ref{secpfptwise} and the proof of Theorem \ref{llnthm} is given in section \ref{secllnthm}. The proof of Theorem \ref{lastcars} is given in
section \ref{lastcarsproof}, the proof of Corollary \ref{genj} is given in section \ref{genjproof} and the proof of Theorem \ref{jnlastcars} is given in section \ref{jnlastcarsproof}.
The proof of Theorem \ref{lrmaxthm} is given in section \ref{lrmaxthmproof} and the proof of Theorem \ref{lrmaxasympthm} is given in sections \ref{lrmaxasympthmproofi} and
\ref{lrmaxasympthmproofii}.

\section{Proof of Theorem \ref{ptwisedist}}\label{secpfptwise}
Recall the definitions of $l_{n,i}$ and $\tilde l_{n,i}$ in \eqref{littlel} and \eqref{littleltilde}.
For the proof of the theorem, it will be convenient to work with
$\tilde l_{n,i}$.
Recall from \eqref{Ln} that
$$
L_n(\sigma)=\prod_{i=1}^n\tilde l_{n,i}(\sigma).
$$

The theorem will follow if we show that for each $\sigma\in S_n$, there are $L_n(\sigma)$ different parking functions $\pi\in \mathcal{P}_n$ such that $T_n(\pi)=\sigma$, where
$T_n$ is as in the paragraph containing equation \eqref{defprob}.
Before giving a formal proof of the theorem, we illustrate the proof with a concrete example, from which the general result should be clear.
Consider the permutation $\sigma=379218645\in S_9$.
We look for those $\pi\in \mathcal{P}_9$ that satisfy $T_9(\pi)=\sigma$.
From the definition of the parking process and from the definition of $T_n$, we need $\pi_1=5$ in order to have $\sigma_5=1$,  $\pi_2=4$ in order to have $\sigma_4=2$, $\pi_3=1$ in order to have $\sigma_1=3$ and $\pi_4=8$ in order to have $\sigma_8=4$.  In order to have $\sigma_9=5$, we can either have $\pi_5=9$, in which case car number 5 parks in its preferred space 9, or alternatively,
$\pi_5=8$, in which case car number 5 attempts to park in its preferred space 8 but fails, and then moves on to space 9 and parks. Then we need $\pi_6=7$ in order to have $\sigma_7=6$.
Then similar to the explanation regarding $\pi_5$, we need $\pi_7$ to be either 1 or 2 in order to have $\sigma_2=7$. In order to have
$\sigma_6=8$, we can have either $\pi_8=6$, in which car number 8 parks directly in its preferred space 6, or alternatively $\pi_8=5$, in which case car number 8 tries and fails to park in space number 5 and then parks in space number 6, or alternatively, $\pi_8=4$, it which case car number 8 tries and fails to park in space number 4 and then also in space number 5, before finally parking in space number 6. Similarly, we need $\pi_9$ to be equal to 1,2 or 3 in order to have $\sigma_9=3$.
Thus, there are $1\times1\times1\times1\times 2\times1\times 2\times 3\times 3=\prod_{i=1}^9\tilde l_{9,i}(\sigma)=L_9(\sigma)$ different parking functions $\pi\in \mathcal{P}_9$ that yield $T_9(\pi)=\sigma$.

To give a formal proof for the general case, fix $\sigma\in S_n$. In order to have $T_n(\pi)=\sigma$, first we need $\pi_1=\sigma^{-1}_1$. Thus there is just one choice for
$\pi_1$, and note that $\tilde l_{n,1}(\sigma)=1$. Now let $k\in[n-1]$ and assume that we have chosen $\pi_1,\cdots\pi_k$ in such a way that car number $i$ has parked in space
$\sigma^{-1}_i$, for $i\in[k]$. We now want  car number $k+1$ to park in space $\sigma^{-1}_{k+1}$. By construction, this space is vacant at this point, and so are the $\tilde l_{n,k+1}(\sigma)-1$ spaces immediately to the left of this space. However the space $\tilde l_{n,k+1}$ spaces to the left of this space is not vacant (or possibly this space doesn't exist--it would be the zeroth space). Thus, by the parking process,  car number $k+1$ will park in space $\sigma^{-1}_{k+1}$ if and only if  $\pi_{k+1}$ is equal to one of the $\tilde l_{n,k+1}(\sigma)$ numbers $\sigma^{-1}_{k+1},\sigma^{-1}_{k+1}-1,\cdots, \sigma^{-1}_{k+1}-\tilde l_{n,k+1}(\sigma)+1$.
This shows that there are $L_n(\sigma)=\prod_{i=1}^n\tilde l_{n,i}(\sigma)$ different parking functions $\pi$ satisfying $T_n(\pi)=\sigma$.
\hfill $\square$

\section{Proof of Theorem  \ref{llnthm}}\label{secllnthm}
We begin with several preliminary results. Recall that $P_n$ is the uniform probability measure on $S_n$.
\begin{lemma}\label{lemmadistlni}
\begin{equation}\label{distlni}
P_n(l_{n,i}=j)=\begin{cases} \frac1j-\frac1{j+1}=\frac1{j(j+1)},\ j=1,\cdots, i-1;\\ \frac1i, \ j=i.\end{cases}.
\end{equation}
\end{lemma}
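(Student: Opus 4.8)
The plan is to route through the tail (survival) probabilities $P_n(l_{n,i}\ge j)$, which admit a clean closed form by a symmetry argument, and then to recover the point masses by differencing.

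First I would unwind the definition \eqref{littlel}. Since $\sigma_i$ being the maximum of the window $\sigma_{i-j+1},\dots,\sigma_i$ of length $j$ automatically forces it to be the maximum of every shorter window ending at position $i$, the set $\{l\in[i]:\sigma_i=\max(\sigma_i,\dots,\sigma_{i-l+1})\}$ is exactly $\{1,2,\dots,l_{n,i}(\sigma)\}$. Consequently, for $1\le j\le i$ the event $\{l_{n,i}\ge j\}$ coincides with the event that $\sigma_i$ is the largest among the $j$ entries occupying positions $i-j+1,\dots,i$.

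Next I would invoke exchangeability. Under the uniform measure $P_n$ the relative order of the entries sitting in any fixed set of $j$ positions is itself uniform over the $j!$ possible orderings; in particular each of those $j$ positions is equally likely to carry the maximum of the corresponding $j$ entries. Hence the probability that position $i$ holds that maximum is $\frac1j$, which gives $P_n(l_{n,i}\ge j)=\frac1j$ for every $j=1,\dots,i$. Because $l_{n,i}\le i$ always, we also have $P_n(l_{n,i}\ge i+1)=0$.

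Finally I would difference the tail. For $j=1,\dots,i-1$,
\[
P_n(l_{n,i}=j)=P_n(l_{n,i}\ge j)-P_n(l_{n,i}\ge j+1)=\frac1j-\frac1{j+1}=\frac1{j(j+1)},
\]
while for the endpoint $j=i$ the upper tail vanishes, so $P_n(l_{n,i}=i)=P_n(l_{n,i}\ge i)=\frac1i$, which is precisely \eqref{distlni}. I expect essentially no genuine obstacle here: the argument is a one-line exchangeability computation once the defining event is correctly transcribed. The only point demanding a little care is the boundary case $j=i$, where the position $i-j=0$ does not exist, so the length-$i$ window is all of $\sigma_1,\dots,\sigma_i$ and the tail terminates rather than continuing to $j=i+1$.
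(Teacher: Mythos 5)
Your proposal is correct and follows the same route as the paper: identify the event $\{l_{n,i}\ge j\}$ with $\{\sigma_i=\max(\sigma_i,\dots,\sigma_{i-j+1})\}$, use uniformity of $P_n$ to get the tail probability $\frac1j$ for $1\le j\le i$, and then difference the tails (with the boundary case $j=i$ giving $\frac1i$). The paper leaves the final differencing step implicit ("the lemma now follows"), whereas you spell it out, but the argument is identical.
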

\begin{proof}
Fix $i$ and let $j\in[i]$. The event  $\{l_{n,i}(\sigma)\ge j\}$ is the event $\big\{\sigma_i=\max\{\sigma_i,\sigma_{i-1}, \cdots, \sigma_{i-j+1}\}\big\}$.
Since $P_n$ is the uniform distribution on $S_n$, we have
\begin{equation}\label{uppertail}
P_n(l_{n,i}\ge j)=\frac1j,\ i\in[n],\ 1\le j\le i.
\end{equation}
  The lemma now follows.
\end{proof}

We now write
\begin{equation}\label{logLn}
\mathcal{S}_n:=\log L_n=\sum_{i=1}^n\log l_{n,i}.
\end{equation}
From Lemma \ref{lemmadistlni}, we have
\begin{equation}\label{explni}
E_n\log l_{n,i}=\sum_{j=1}^{i-1}\frac{\log j}{j(j+1)}+\frac{\log i}i.
\end{equation}
Note that $E_n\log l_{n,i}$ does not depend on $n$, but of course it is only defined for $1\le i\le n$.
\begin{lemma}\label{lemmaexpvaluelim}
\begin{equation}\label{expvaluelim}
\lim_{n,i\to\infty}E_n\log l_{n,i}=\log 2.
\end{equation}
\end{lemma}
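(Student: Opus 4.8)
The plan is to reduce the stated double limit to a single limit and then to the evaluation of one numerical series. Since the right-hand side of \eqref{explni} does not involve $n$, the quantity $E_n\log l_{n,i}$ depends on $i$ alone, so $\lim_{n,i\to\infty}E_n\log l_{n,i}=\lim_{i\to\infty}E_n\log l_{n,i}$ and it suffices to analyze
\[
E_n\log l_{n,i}=\sum_{j=1}^{i-1}\frac{\log j}{j(j+1)}+\frac{\log i}{i}
\]
as $i\to\infty$. The term $\frac{\log i}{i}$ tends to $0$, so everything reduces to the limit of the partial sums $\sum_{j=1}^{i-1}\frac{\log j}{j(j+1)}$.

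First I would settle convergence, which is routine: since $0\le \frac{\log j}{j(j+1)}\le \frac{\log j}{j^2}$ and $\sum_j \frac{\log j}{j^2}<\infty$, the series $\sum_{j=1}^{\infty}\frac{\log j}{j(j+1)}$ converges, and monotone convergence of the partial sums gives $\lim_{i\to\infty}E_n\log l_{n,i}=\sum_{j=1}^{\infty}\frac{\log j}{j(j+1)}$. Thus the lemma is equivalent to the purely analytic claim that this series equals $\log 2$, and the substance of the proof is the closed-form evaluation of $S:=\sum_{j=1}^{\infty}\frac{\log j}{j(j+1)}$.

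This evaluation is the step I expect to be the main obstacle. The natural tool is summation by parts: writing $\frac{1}{j(j+1)}=\frac1j-\frac1{j+1}$ and applying Abel summation to $\sum_{j=1}^N(\log j)\bigl(\tfrac1j-\tfrac1{j+1}\bigr)$, the boundary term is $\frac{\log N}{N+1}\to 0$ and the telescoped sum becomes $\sum_{j=2}^{\infty}\frac1j\log\frac{j}{j-1}$, so that $S=\sum_{j=2}^{\infty}\frac1j\log\frac{j}{j-1}$. I would then pin down this constant, cross-checking against the alternative representation obtained by expanding $\frac{1}{j(j+1)}=\sum_{m\ge2}(-1)^{m}j^{-m}$ for $j\ge 2$ and interchanging the order of summation, which expresses $S=\sum_{m=2}^{\infty}(-1)^{m+1}\zeta'(m)$ in terms of derivatives of the Riemann zeta function.

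Special care is warranted precisely in this last step. The target value $\log 2$ is exactly the logarithm of the \emph{typical} size of $l_{n,i}$, in the sense that $P_n(l_{n,i}\ge 2)=\tfrac12$ by Lemma \ref{lemmadistlni}, whereas the \emph{expectation} $E_n\log l_{n,i}$ is influenced by the heavy upper tail $P_n(l_{n,i}\ge j)=1/j$; these two summary statistics need not coincide. Moreover the summand $\frac{\log j}{j(j+1)}$ decays only like $\frac{\log j}{j^2}$, so the tail of $S$ is far from negligible and the numerical value is not transparent from the first several terms. For these reasons the telescoping/boundary bookkeeping in the summation by parts and the interchange of summation in the $\zeta'$-representation are exactly where an off-by-a-constant error can enter, and I would verify the final identification of $S$ with the greatest scrutiny.
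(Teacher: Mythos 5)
Your write-up is correct as far as it goes, and it starts down the same road as the paper: discard the $\frac{\log i}{i}$ term in \eqref{explni} and evaluate $S=\sum_{j\ge1}\frac{\log j}{j(j+1)}$ by summation by parts (the paper uses the integral Abel-summation formula, you use the discrete form). But as a proof it has a hole exactly where you predicted one: you never carry out ``the final identification of $S$,'' and that identification \emph{is} the lemma.

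The real point, however, is that the identification cannot be carried out, because $S\neq\log 2$: the lemma as stated, and the paper's proof of it, are wrong, and your own telescoped series is what the limit actually equals. From \eqref{uppertail} and the telescoping $\log l=\sum_{j=2}^{l}\log\frac{j}{j-1}$ one gets the exact identity
\[
E_n\log l_{n,i}=\sum_{j=2}^{i}\frac1j\log\frac{j}{j-1},\qquad 2\le i\le n,
\]
which is also what your summation by parts gives in the limit. The right-hand side is increasing in $i$, and already at $i=11$ its value is $0.699\ldots>0.693\ldots=\log 2$; since all terms are positive, it follows rigorously that
\[
\lim_{n,i\to\infty}E_n\log l_{n,i}=\sum_{j=2}^{\infty}\frac1j\log\frac{j}{j-1}=0.78853\ldots,
\]
the constant appearing in the Alladi--Grinstead problem, not $\log 2$. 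The flaw in the paper's argument is the substitution of the continuous interpolation for the summatory function inside the Abel integral: in $\int_1^iA(t)f'(t)\,dt$ one must take the step function $A(t)=\lfloor t\rfloor/(\lfloor t\rfloor+1)$, which yields $\int_1^iA(t)\frac{dt}{t}=\sum_{k=1}^{i-1}\frac{k}{k+1}\log\frac{k+1}{k}=\log i-\sum_{j=2}^{i}\frac1j\log\frac{j}{j-1}$, whereas the paper uses $A(t)=\frac{t}{t+1}$ and gets $\log(i+1)-\log 2$. The two computations differ in the limit by exactly $S-\log2\approx0.095$.

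So your instinct about where an ``off-by-a-constant'' error could enter was correct, but you should have pushed the computation through: ten terms of your own series disprove the stated limit. The error propagates: in Theorem \ref{llnthm} and Corollary \ref{corP-Ppark} the constant $2$ must be replaced by $e^{c_0}\approx 2.20$, where $c_0=\sum_{j\ge2}\frac1j\log\frac{j}{j-1}$; the proofs there survive verbatim with the corrected value, since they use only the limit \eqref{expvaluelim} and the variance bound \eqref{varbound}.
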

\begin{proof}
Recall the Abel-type summation formula \cite{P14}:
$$
\sum_{1<r\le x}a(r)f(r)=A(x)f(x)-A(1)f(1)-\int_1^x A(t)f'(t)dt, \ \text{where}\ A(r)=\sum_{i=1}^ra_i.
$$
We apply  this formula with $a(r)=\frac1{r(r+1)}=\frac1r-\frac1{r+1}$ and  $f(r)=\log r$. We have
$A(r)=1-\frac1{r+1}=\frac r{r+1}$.
 Recalling \eqref{explni}, we obtain
$$
\begin{aligned}
&\lim_{n,i\to\infty}E_n\log l_{n,i}=\lim_{i\to\infty}\sum_{j=1}^{i-1}\frac{\log j}{j(j+1)}=\lim_{i\to\infty}\left(\frac i{i+1}\log i-\int_1^i\frac t{t+1}\frac1tdt\right)=\\
&\lim_{i\to\infty}\left(\frac i{i+1}\log i-\log(i+1)+\log2\right)=\lim_{i\to\infty}\left(\log\frac i{i+1}-\frac{\log i}{i+1}+\log2\right)=\log 2.
\end{aligned}
$$
\end{proof}
From \eqref{logLn} and \eqref{expvaluelim}, we conclude that
\begin{equation}\label{limScaln}
\lim_{n\to\infty}\frac{E_n\mathcal{S}_n}n=\lim_{n\to\infty}\frac1nE_n\log L_n=\log 2.
\end{equation}

We now consider $E_n\mathcal{S}_n^2$. We have
\begin{equation}\label{2ndmom}
E_n\mathcal{S}_n^2=E_n\left(\sum_{i=1}^n\log l_{n,i}\right)^2=\sum_{i=1}^nE_n\log l_{n,i}^2\thinspace+2\sum_{1\le i<j\le n}E_n\log l_{n,i}\log l_{n,j}.
\end{equation}
We have the following proposition.
\begin{proposition}
For $1\le i<j\le n$, the random variables  $l_{n,i}$ and $l_{n,j}$ on $(S_n,P_n)$ are negatively correlated; that is,
\begin{equation}\label{negcorr}
P_n(l_{n,i}\ge k,\l_{n,j}\ge k)\le P_n(l_{n,i}\ge k)P_n(l_{n,j}\ge l), \ \text{for}\ k,l\ge1.
\end{equation}
\end{proposition}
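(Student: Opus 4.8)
The plan is to prove the (evidently intended) inequality $P_n(l_{n,i}\ge k,\ l_{n,j}\ge l)\le P_n(l_{n,i}\ge k)\,P_n(l_{n,j}\ge l)$ for all $k,l\ge 1$. Since $l_{n,i}\le i$ and $l_{n,j}\le j$ by definition, if $k>i$ or $l>j$ the left-hand side vanishes and there is nothing to prove, so I may assume $1\le k\le i$ and $1\le l\le j$. The first step is to reformulate the two events geometrically. Writing $W_i=\{i-k+1,\dots,i\}$ and $W_j=\{j-l+1,\dots,j\}$ for the two windows (both contained in $[n]$ under the assumption on $k,l$), the event $E_i:=\{l_{n,i}\ge k\}$ is exactly $\{\sigma_i=\max_{m\in W_i}\sigma_m\}$, that is, position $i$ carries the largest value of $\sigma$ over $W_i$; likewise $E_j:=\{l_{n,j}\ge l\}=\{\sigma_j=\max_{m\in W_j}\sigma_m\}$. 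Throughout I will use the standard fact that, under $P_n$, the restriction of $\sigma$ to any set of positions $V$ induces a uniformly random linear order on $V$; in particular $P_n(E_i)=\tfrac1k$ and $P_n(E_j)=\tfrac1l$ (recovering \eqref{uppertail}), and conditioning on the location of the maximum over $V$ leaves the relative order of the remaining positions uniform.

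Next I split into two cases according to whether the windows meet. If $W_i\cap W_j=\emptyset$ (equivalently $j-l+1>i$), then $E_i$ and $E_j$ depend on the relative orders of $\sigma$ on two disjoint position sets, which are independent; hence $P_n(E_i\cap E_j)=P_n(E_i)P_n(E_j)$ and the inequality holds with equality.

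The substantive case is $s:=|W_i\cap W_j|\ge1$. Because $i<j$ and both windows are intervals ending at $i$ and $j$ respectively, the overlap is the interval $\{\max(i-k+1,\,j-l+1),\dots,i\}$, whose right endpoint is $i$; in particular $i\in W_j$. Set $V:=W_i\cup W_j$, so $|V|=k+l-s$. The key observation is the set identity
\[
E_i\cap E_j=\{\sigma_j=\textstyle\max_{m\in V}\sigma_m\}\cap E_i .
\]
Indeed, if $j$ attains the maximum over all of $V\supseteq W_j$ then in particular $\sigma_j=\max_{m\in W_j}\sigma_m$, giving $E_j$; conversely, on $E_i\cap E_j$ one has $\sigma_j>\sigma_i$ (since $i\in W_j\setminus\{j\}$ and $E_j$ holds) and $\sigma_i\ge\max_{m\in W_i}\sigma_m$, whence $\sigma_j$ dominates every position of both $W_i$ and $W_j$, i.e. $\sigma_j=\max_{m\in V}\sigma_m$. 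Now by exchangeability $P_n(\sigma_j=\max_{m\in V}\sigma_m)=\tfrac1{|V|}=\tfrac1{k+l-s}$, and, conditioned on this event, the remaining positions of $V\setminus\{j\}$ carry a uniform relative order; since $E_i$ depends only on the $k$ positions of $W_i\subseteq V\setminus\{j\}$, the conditional probability that $i$ is largest among them is $\tfrac1k$. The computation then gives
\[
P_n(E_i\cap E_j)=\frac1{k+l-s}\cdot\frac1k .
\]
Finally, since $s=|W_i\cap W_j|\le|W_i|=k$, we have $k+l-s\ge l$, so $P_n(E_i\cap E_j)=\tfrac1{k(k+l-s)}\le\tfrac1{kl}=P_n(E_i)P_n(E_j)$, as required.

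The only real content is the set identity in the overlapping case: recognizing that the joint occurrence of the two ``window-record'' events forces $j$ to hold the \emph{global} maximum over the union $V$ of the two windows. Once this is seen, both factors decouple by the uniform-relative-order principle, and the inequality reduces to the trivial bound $s\le k$. I expect the bookkeeping for this identity --- verifying that $\sigma_j$ dominates the left part $W_i\setminus W_j$ through the chain $\sigma_j>\sigma_i\ge\max_{m\in W_i\setminus W_j}\sigma_m$ --- to be the one place demanding care; everything else is exchangeability applied twice.
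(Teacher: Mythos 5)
Your proof is correct and follows essentially the same route as the paper: both reduce the joint event, in the overlapping-window case, to the identity that $\sigma_j$ must be the maximum over the union $V=W_i\cup W_j$ together with the window-record event at $i$, and then decouple the two factors by exchangeability to get $\frac1{|V|}\cdot\frac1k\le\frac1{kl}$. Your bookkeeping via $s=|W_i\cap W_j|$ (so $|V|=k+l-s\ge l$) is just a notational variant of the paper's $r=\min(i-k+1,j-l+1)$ and $\max(l,j-i+k)\ge l$.
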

\begin{proof}
Since $P_n$ is the uniform probability measure on $S_n$,  for any $k\le i$,   the events $\{l_{n,i}\ge k\}=\big\{\sigma_i=\max(\sigma_i,\cdots, \sigma_{i-k+1})\big\}$ and
$\{l_{n,j}\ge l\}=\big\{\sigma_j=\max(\sigma_j,\cdots, \sigma_{j-l+1})\big\}$ are independent if $l\le j-i$.
Thus, \eqref{negcorr} holds with equality in these cases.

Consider now the case $k\le i$ and $j-i+1\le l\le j$. In this case
\begin{equation}\label{intersectionevents}
\begin{aligned}
&\{l_{n,i}\ge k,l_{n,j}\ge l\}=\big\{\sigma_j=\max(\sigma_j,\sigma_{j-1},\cdots, \sigma_r)\big\}\cap\big\{\sigma_i=\max(\sigma_i,\sigma_{i-1},\cdots, \sigma_{i-k+1})\big\},\\
& \text{where}\
r=\min(i-k+1,j-l+1).
\end{aligned}
\end{equation}
We have
\begin{equation}\label{firstprob}
P_n\left(\sigma_j=\max(\sigma_j,\sigma_{j-1},\cdots, \sigma_r)\right)=\frac1{\max(l,j-i+k)}\le \frac1l.
\end{equation}
Also,
\begin{equation}\label{cond2ndprob}
P_n\left(\sigma_i=\max(\sigma_i,\sigma_{i-1},\cdots,\sigma_{i-k+1} |\sigma_j=\max(\sigma_j,\sigma_{j-1},\cdots, \sigma_r)\right)=\frac1k,
\end{equation}
because $\{\sigma_i,\cdots, \sigma_{i-k+1}\}\subset\{\sigma_j,\sigma_{j-1},\cdots, \sigma_r\}$.
The proposition follows from \eqref{intersectionevents}--\eqref{cond2ndprob} and \eqref{uppertail}.
\end{proof}
We can now prove the theorem.

\noindent \it Proof of Theorem \ref{llnthm}.\rm\
Since $l_{n,i}$ and $l_{n,j}$ are negatively correlated, one has $E_nf(l_{n,i})g(l_{n,j})\le E_nf(l_{n,i})E_ng(l_{n,j})$, if $f$ and $g$ are increasing functions on $[n]$.
In particular then,
\begin{equation}\label{covar}
E_n\log l_{n,i}\log l_{n,j}\le E_n\log l_{n,i}E_n\log l_{n,j}.
\end{equation}
From \eqref{logLn} and  \eqref{covar}, a standard straightforward calculation gives
\begin{equation}\label{varsuminequal}
\text{\rm Var}(\mathcal{S}_n)\le\sum_{i=1}^n\text{\rm Var}(\log l_{n,i}).
\end{equation}
From \eqref{distlni}, we have
$$
E_n(\log l_{n,i})^2=\sum_{j=1}^{i-1}\frac{(\log j)^2}{j(j+1)}+\frac{(\log i)^2}{i(i+1)}.
$$
Using this with \eqref{expvaluelim} and \eqref{varsuminequal}, we conclude that there exists a $C>0$ such that
\begin{equation}\label{varbound}
\text{\rm Var}(\mathcal{S}_n)\le Cn,\ n\in\mathbb{N}.
\end{equation}
From \eqref{limScaln} and\eqref{varbound}, it follows from the second moment method (Chebyshev's inequality) that
\begin{equation}\label{WLLN}
\lim_{n\to\infty}P_n(\log2-\epsilon\le \frac{\mathcal{S}_n}n\le\log2+\epsilon)=1,\ \text{for all}\ \epsilon>0.
\end{equation}
Now \eqref{wlln} follows from \eqref{WLLN} and \eqref{logLn}.
\hfill $\square$

\section{Proof of Theorem \ref{lastcars}}\label{lastcarsproof}
We count how many parking functions $\{\pi_l\}_{l=1}^n$  yield  the event $\{\sigma^{-1}_{n-j+1}=i_1,\sigma^{-1}_{n-j+2}=i_2,\cdots, \sigma^{-1}_n=i_j\}$. In order for this event to occur,  the first $n-j$ cars have to park arbitrarily in the spots
$[n]-\{k_l\}_{l=1}^j$. Then the last $j$ cars have to park, with each one in its appropriate space.

We now analyze the parking of the first $n-j$ cars.
In order for $k_1-1$ cars from among   the first  $n-j$  cars to park in the first $k_1-1$ spaces, but for no one of the first $n-j$ cars  to park in the $k_1$th space,
it follows from the definition of the parking process that
there must be a collection of $k_1-1$ of the $\{\pi_i\}_{l=1}^{n-j}$ which constitute a parking function of length $k_1-1$. We can freely choose which of the first $n-j$ cars to use for these spaces.
Since there are $k_1^{k_1-2}$ parking functions of length $k_1-1$, this gives $\binom{n-j}{k_1-1}k_1^{k_1-2}$ different choices.

In order for $k_2-k_1-1$ cars, from among   the  $n-j-(k_1-1)$ cars that still remain from among the first $n-j$ cars, to park in the spaces $k_1+1,\cdots, k_2-1$, but for no one of the first $n-j$ cars to park in the $k_1$th space or the $k_2$th space,
there must be a collection  of $k_2-k_1-1$ of the remaining $n-j-(k_1-1)$ members of $\{\pi_l\}_{l=1}^{n-j}$ that constitute a parking function of length $k_2-k_1-1$, but shifted forward by $k_1$ spaces.
(If $\{\pi_l\}_{l=1}^m$ is  a parking function of length $m$, then we call the collection $\{a+\pi_l\}_{l=1}^m$, where $a\in\mathbb{N}$, a parking function of length $m$ shifted forward by $a$ spaces. Obviously, the number of such parking functions coincides with the number of parking functions of length $m$.)
We can freely choose which of the remaining $n-j-(k_1-1)$ cars from among the first $n-j$ cars to  use for these spaces. Thus, there are $\binom{n-j-(k_1-1)}{k_2-k_1-1}(k_2-k_1)^{k_2-k_1-2}$ different choices.

We continue counting in this fashion until we have chosen cars for the $k_j-k_{j-1}-1$ spaces between spaces $k_{j-1}$ and $k_j$. Recall that $k_0$ has been defined to be 0. Now we need for all of the $n-j-\sum_{i=1}^j(k_i-k_{i-1}-1)=n-k_j$ cars that remain from among the first $n-j$ cars to park in the spaces $k_j+1,\cdots, n$.
In order for this to occur, but for no one of the first $n-j$ cars to park in the $k_j$th space, these remaining members of $\{\pi_i\}_{l=1}^{n-j}$  must   constitute a parking function of length $n-k_j$, but shifted forward by $k_j$ spaces. This gives $(n-k_j+1)^{n-k_j-1}$ different choices.

Thus, we have shown that the number of ways to choice $\{\pi_l\}_{l=1}^{n-j}$ is
\begin{equation}\label{n-jcount}
\begin{aligned}
&\binom{n-j}{k_1-1}k_1^{k_1-2}\binom{n-j-k_1+1}{k_2-k_1-1}(k_2-k_1)^{k_2-k_1-2}\cdots\times\\
&\binom{n-j-\sum_{l=1}^{j-1}(k_l-k_{l-1})}{k_j-k_{j-1}-1}(k_j-k_{j-1})^{k_j-k_{j-1}-2}(n-k_j+1)^{n-k_j-1}=\\
&\frac{(n-j)!}{(n-k_j)!\prod_{l=1}^j(k_l-k_{l-1}-1)!}\thinspace(n-k_j+1)^{n-k_j-1}\prod_{l=1}^j(k_l-k_{l-1})^{k_l-k_{l-1}-2}.
\end{aligned}
\end{equation}

Now we count how many ways we can choice the $\{\pi_l\}_{l=n-j+1}^n$ in order that $\sigma_{n-j+l}=i_l,\ l=1,\cdots, j$, or equivalently, in order that car $n-j+l$ park in space $i_l,\ l=1,\cdots, j$. This calculation uses the same reasoning that was used in the proof of Theorem \ref{ptwisedist}.
After the first $n-j$ cars have parked appropriately as above, the spaces
$\{k_l\}_{l=1}^j=\{i_l\}_{l=1}^j$ are still vacant. From the definition of the parking process and from the definition of $\tilde l_{n,n-j+1}(\sigma)$,
in order that car $n-j+1$ park in space $i_1$,  $\pi_{n-j+1}$ must take on one of the values $i_1.i_1-1,\cdots i_1-\tilde l_{n,n-j+1}(\sigma)+1$.
Then in order that car $n-j+2$ park in space $i_2$, $\pi_{n-j+2}$ must take on one of the values $i_2,i_2-1,\cdots i_2-\tilde l_{n,n-j+2}(\sigma)+1$.
Continuing like this, we conclude that there are $\prod_{l=1}^j\tilde l_{n,n-j+l}(\sigma)$ choices.
Using this with \eqref{n-jcount}, and noting that there are $(n+1)^{n-1}$ different parking functions of length $n$, we obtain \eqref{lastcarsformula}.
\hfill $\square$

\section{Proof of Corollary \ref{genj}}\label{genjproof}
Let $\{m_r\}_{r=1}^j\subset\mathbb{N}$.
By Theorem \ref{lastcars}, for $n\ge\sum_{r=1}^jm_r$,
the probability\newline
 $P_n^{\text{park}}(n+1-\sigma^{-1}_{n-j+l}=\sum_{r=l}^jm_r;\ l=1,\cdots, j)$
is given by
the formula on the right hand side of \eqref{lastcarsformula} with
%By Theorem \ref{lastcars},
%the formula for $\mu_{n;j}(m_1,\cdots, m_j)$ is given by the right hand side of \eqref{lastcarsformula}
 with $i_l=n+1-\sum_{r=l}^jm_r,\ l\in[j]$. Since these $\{i_l\}_{l=1}^j$ are increasing, we have $k_l=i_l, \ l\in[j]$.
So we have
\begin{equation}\label{kdifferences}
\begin{aligned}
&k_l-k_{l-1}=m_{l-1},\ l=2,\cdots j; \\
 &k_1-k_0=n+1-\sum_{r=1}^jm_r.
\end{aligned}
\end{equation}
Since the $\{i_l\}_{l=1}^j$ are increasing and represent the positions respectively of the numbers  $\{n-j+l\}_{l=1}^j$ in the permutation $\sigma$, it follows
from the definition of $\tilde l_{n,\cdot}(\sigma)$ in \eqref{littleltilde} that
\begin{equation}\label{ltildevalues}
\tilde l_{n,n-j+l}(\sigma)=i_l=n+1-\sum_{r=l}^jm_r.
\end{equation}
Substituting from \eqref{kdifferences} and \eqref{ltildevalues} into the right hand side of  \eqref{lastcarsformula}, the expression there becomes
\begin{equation}\label{bigexpression}
\begin{aligned}
&\frac{(n-j)!\left(n+1-\sum_{r=1}^jm_r\right)^{n-1-\sum_{r=1}^jm_r}\left(\prod_{l=2}^jm_{l-1}^{m_{l-1}-2}\right)m_j^{m_j-2}\prod_{l=1}^j\left(n+1-\sum_{r=l}^jm_r\right)}
{(n+1)^{n-1}(n-\sum_{r=1}^jm_r)!\left(\prod_{l=2}^j(m_{l-1}-1)!\right)(m_j-1)!}.
\end{aligned}
\end{equation}
We rewrite the terms in \eqref{bigexpression} that do not involve $n$ as
\begin{equation}\label{notn}
\frac{\left(\prod_{l=2}^jm_{l-1}^{m_{l-1}-2}\right)m_j^{m_j-2}}{\left(\prod_{l=2}^j(m_{l-1}-1)!\right)(m_j-1)!}=\prod_{l=1}^j\frac{m_l^{m_l-1}}{m_l!}.
\end{equation}
The terms in \eqref{bigexpression} that involve $n$ satisfy
\begin{equation}\label{withn}
\begin{aligned}
&\lim_{n\to\infty}\frac{(n-j)!\left(n+1-\sum_{r=1}^jm_r\right)^{n-1-\sum_{r=1}^jm_r}\prod_{l=1}^j\left(n+1-\sum_{r=l}^jm_r\right)}{(n+1)^{n-1}(n-\sum_{r=1}^jm_r)!}=\\
&e^{-\sum_{r=1}^jm_r}.
\end{aligned}
\end{equation}
From \eqref{bigexpression}-\eqref{withn}, it follows that the right hand side of \eqref{lastcarsformula} with $i_l=n+1-\sum_{r=l}^jm_r,\ l\in[j]$, converges as $n\to\infty$ to
$\prod_{l=1}^j\frac{m_l^{m_l-1}e^{-m_l}}{m_l!}$.
Thus, we have proven that
\begin{equation}\label{weaklimitproduct}
\lim_{n\to\infty}P_n^{\text{park}}(n+1-\sigma^{-1}_{n-j+l}=\sum_{r=l}^jm_r;\ l=1,\cdots, j)=\prod_{l=1}^j\frac{m_l^{m_l-1}e^{-m_l}}{m_l!}.
\end{equation}
The corollary follows from this.
\hfill $\square$

\section{Proof of Theorem \ref{jnlastcars}}\label{jnlastcarsproof}
From the proof of Corollary \ref{genj} up to \eqref{notn}, in particular, from \eqref{bigexpression} and \eqref{notn}, we have for $j_n\le n$,
\begin{equation}\label{bigexpressionjn}
\begin{aligned}
&P_n^{\text{park}}\left(n+1-\sigma^{-1}_{n-j_n+1}=\sum_{r=1}^{j_n}m_r,n+1-\sigma^{-1}_{n-j_n+2}=\sum_{r=2}^{j_n}m_r,\cdots,n+1-\sigma^{-1}_n=m_{j_n}\right)=\\
&\left(\prod_{r=1}^{j_n}\frac{m_r^{m_r-1}e^{-m_r}}{m_r!}\right)A(n,j_n;m_1\cdots,m_{j_n})B(n,j_n;m_1,\cdots,m_{j_n}),\ \text{for}\ \sum_{r=1}^{j_n}m_r\le n,\ m_r\ge1,
\end{aligned}
\end{equation}
where
\begin{equation}\label{AB}
\begin{aligned}
&A(n,j;m_1\cdots,m_j)=e^{\sum_{r=1}^jm_r}\left(\frac{n+1-\sum_{r=1}^jm_r}{n+1}\right)^{n-1}=
e^{\sum_{r=1}^jm_r}\left(1-\frac{\sum_{r=1}^jm_r}{n+1}\right)^{n-1};\\
& B(n,j;m_1,\cdots, m_j)=\frac{(n-j)!}{(n-\sum_{r=1}^jm_r)!}\frac{\prod_{r=1}^j(n+1-\sum_{r=l}^jm_r)}{(n+1-\sum_{r=1}^jm_r)^{\sum_{r=1}^jm_r}}.
\end{aligned}
\end{equation}

By Stirling's formula, $\frac{m^{m-1}e^{-m}}{m!}\sim\frac1{\sqrt{2\pi}}m^{-\frac32}$.
Thus, there exists a $C>0$ such that
$$
\sum_{m=N+1}^\infty\frac{m^{m-1}e^{-m}}{m!}<\frac C{N^{\frac12}},\ N\in\mathbb{N}.
$$
Since $\{\frac{m^{m-1}e^{-m}}{m!}\}_{m=1}^\infty$ is a probability distribution, it then follows that
\begin{equation}\label{wotail}
\sum_{m=1}^N\frac{m^{m-1}e^{-m}}{m!}\ge1-\frac C{N^\frac12},\ N\in\mathbb{N}.
\end{equation}
Using  \eqref{wotail} for the inequality below, we have
\begin{equation}\label{wotailproduct}
\sum_{\stackrel{1\le m_r\le N}{r=1,\cdots, j_n}}\prod_{r=1}^N\frac{m_r^{m_r-1}e^{-m_r}}{m_r!}=\left(\sum_{m=1}^N\frac{m^{m-1}e^{-m}}{m!}\right)^{j_n}\ge\left(1-\frac C{N^\frac12}\right)^{j_n}.
\end{equation}
 Substituting $N_n$ for $N$ in \eqref{wotailproduct},  a standard estimate gives
\begin{equation}\label{probgoesto0}
\lim_{n\to\infty}\sum_{\stackrel{1\le m_r\le N_n}{r=1,\cdots, j_n}}\prod_{r=1}^{N_n}\frac{m_r^{m_r-1}e^{-m_r}}{m_r!} =1,\ \text{if}\ j_n=o(N_n^\frac12).
\end{equation}

We can express $P_n^{\text{park}}(\sigma^{-1}_{n+1-j_n}<\sigma^{-1}_{n+1-j_n+1}<\cdots<\sigma^{-1}_n)$ as
$$
\begin{aligned}
&P_n^{\text{park}}(\sigma^{-1}_{n+1-j_n}<\sigma^{-1}_{n+1-j_n+1}<\cdots<\sigma^{-1}_n)=\\
&\sum_{\stackrel{m_r\ge1;r=1,\cdots, j_n}{\sum_{r=1}^{j_n}m_r\le n}}P_n^{\text{park}}\left(n+1-\sigma^{-1}_{n-j_n+l}=\sum_{r=l}^{j_n}m_r,\ l=1,\cdots j_n\right).
\end{aligned}
$$
Using this with \eqref{bigexpressionjn}, we have
\begin{equation}\label{giant}
\begin{aligned}
&P_n^{\text{park}}(\sigma^{-1}_{n+1-j_n}<\sigma^{-1}_{n+1-j_n+1}<\cdots<\sigma^{-1}_n)=\\
&\sum_{\stackrel{m_r\ge1;r=1,\cdots, j_n}{\sum_{r=1}^{j_n}m_r\le n}}\left(\prod_{r=1}^{j_n}\frac{m_r^{m_r-1}e^{-m_r}}{m_r!}\right)A(n,j_n;m_1\cdots,m_{j_n})B(n,j_n;m_1,\cdots,m_{j_n}).
\end{aligned}
\end{equation}
If $m_r\le N_n$ for $r=1,\cdots, j_n$, then
 trivially, $\sum_{r=1}^{j_n}m_r\le j_nN_n$. Thus,
if we choose $j_n$ and $N_n$ such that
\begin{equation}\label{ABto1}
\begin{aligned}
&\lim_{n\to\infty}A(n,j_n;m_1\cdots,m_{j_n})=1,\ \text{uniformly over}\ \{1\le m_r\le N_n;r=1,\cdots, j_n\};\\
&\lim_{n\to\infty}B(n,j_n;m_1,\cdots,m_{j_n})=1, \ \text{uniformly over}\ \{1\le m_r\le N_n;r=1,\cdots, j_n\},
\end{aligned}
\end{equation}
and such that
\begin{equation}\label{jnNnrelation}
j_nN_n\le n;\ \text{and}\   j_n=o(N_n^\frac12),
\end{equation}
then it will follow from \eqref{probgoesto0}-\eqref{ABto1} that
\begin{equation}\label{goal}
\lim_{n\to\infty}P_n^{\text{park}}(\sigma^{-1}_{n+1-j_n}<\sigma^{-1}_{n+1-j_n+1}<\cdots<\sigma^{-1}_n)=1.
\end{equation}

In the calculations below, we always assume that $1\le m_r\le N_n$, for $r=1,\cdots, j_n$.
We first consider $A(n,j_n;m_1\cdots,m_{j_n})$ in \eqref{AB}.
From Taylor's remainder formula, we have
\begin{equation}\label{Taylor}
-x\ge\log(1-x)\ge-x-\frac12x^2\frac1{(1-x)^2}, \ x\in(0,1).
\end{equation}
Using \eqref{Taylor}, we have
\begin{equation}\label{partofA}
\begin{aligned}
&-\frac{n-1}{n+1}\sum_{r=1}^{j_n}m_r\ge\log\left(1-\frac{\sum_{r=1}^{j_n}m_r}{n+1}\right)^{n-1}\ge\\
&-\frac{n-1}{n+1}\sum_{r=1}^{j_n}m_r-
(n-1)\left(\frac12\frac{(\sum_{r=1}^{j_n}m_r)^2}{(n+1-\sum_{r=1}^{j_n}m_r)^2}\right).
\end{aligned}
\end{equation}
From \eqref{AB} and \eqref{partofA}, we have
\begin{equation}\label{Aest}
\begin{aligned}
&\frac2{n+1}\sum_{r=1}^{j_n}m_r\ge\log A(n,j_n;m_1\cdots,m_{j_n})\ge\\
&\frac2{n+1}\sum_{r=1}^{j_n}m_r-\frac{(n-1)(\sum_{r=1}^{j_n}m_r)^2}{2(n+1-\sum_{r=1}^{j_n}m_r)^2}.
\end{aligned}
\end{equation}
Recalling that $\sum_{r=1}^{j_n}m_r\le j_nN_n$, it follows from \eqref{Aest} that
\begin{equation}\label{Ato1}
\eqref{ABto1}\ \text{holds for}\ A(n,j_n;m_1\cdots,m_{j_n}) \ \text{if}\ j_nN_n=o(n^\frac12).
\end{equation}

We now turn to $B(n,j_n;m_1\cdots,m_{j_n})$ in \eqref{AB}.
We write
\begin{equation}\label{Bform}
B(n,j_n;m_1\cdots,m_{j_n})=C(n,j_n;m_1\cdots,m_{j_n})D(n,j_n;m_1\cdots,m_{j_n}),
\end{equation}
where
\begin{equation}\label{CD}
\begin{aligned}
&C(n,j_n;m_1\cdots,m_{j_n})=\frac{(n-j_n)!}{(n-\sum_{r=1}^{j_n}m_r)!}n^{j_n-\sum_{r=1}^{j_n}m_r};\\
&D(n,j_n;m_1\cdots,m_{j_n})=\frac{\prod_{r=1}^{j_n}(n+1-\sum_{r=l}^{j_n}m_r)}{(n+1-\sum_{r=1}^{j_n}m_r)^{\sum_{r=1}^{j_n}m_r}}n^{\sum_{r=1}^{j_n}m_r-j_n}.
\end{aligned}
\end{equation}
Let $w_n=\sum_{r=1}^{j_n}m_r$. We suppress the dependence of $w_n$ on $\{m_r\}_{r=1}^{j_n}$. For any choice of $\{m_r\}_{r=1}^{j_n}$,   we have
$j_n\le w_n\le j_nN_n$.

We have
$$
(n-w_n)^{w_n-j_n}n^{j_n-w_n}\le C(n,j_n;m_1\cdots,m_{j_n})\le(n-j_n)^{w_n-j_n}n^{j_n-w_n},
$$
or equivalently,
\begin{equation}\label{Cest}
\left(1-\frac{w_n}n\right)^{w_n-j_n}\le C(n,j_n;m_1\cdots,m_{j_n})\le\left(1-\frac{j_n}n\right)^{w_n-j_n}.
\end{equation}
Since $w_n\le j_nN_n$, it follows from \eqref{Cest} that
\begin{equation}\label{Cto1}
\begin{aligned}
&\text{if}\ j_nN_n=o(n^\frac12), \ \text{then}\\
&\lim_{n\to\infty}C(n,j_n;m_1\cdots,m_{j_n})=1, \ \text{uniformly over}\ \{1\le m_r\le N_n;r=1,\cdots, j_n\}.
\end{aligned}
\end{equation}

We have
$$
\frac{(n+1-w_n)^{j_n}}{(n+1-w_n)^{w_n}}n^{w_n-j_n}\le D(n,j_n;m_1\cdots,m_{j_n})\le \frac{n^{j_n}}{(n+1-w_n)^{w_n}}n^{w_n-j_n},
$$
or equivalently,
\begin{equation}\label{Dest}
\left(1-\frac{w_n-1}n\right)^{j_n}\left(1+\frac{w_n-1}{n+1-w_n}\right)^{w_n}\le D_n\le\left(1+\frac{w_n-1}{n+1-w_n}\right)^{w_n}.
\end{equation}
Since $w_n\le j_nN_n$, it follows from \eqref{Dest} that
\begin{equation}\label{Dto1}
\begin{aligned}
&\text{if}\ j_nN_n=o(n^\frac12), \ \text{then}\\
&\lim_{n\to\infty}D(n,j_n;m_1\cdots,m_{j_n})=1, \ \text{uniformly over}\ \{1\le m_r\le N_n;r=1,\cdots, j_n\}.
\end{aligned}
\end{equation}
From  \eqref{Bform}, \eqref{Cto1} and \eqref{Dto1}, it follows that
\begin{equation}\label{Bto1}
\eqref{ABto1}\ \text{holds for}\ B(n,j_n;m_1\cdots,m_{j_n}) \ \text{if}\ j_nN_n=o(n^\frac12).
\end{equation}

From \eqref{Ato1}, \eqref{Bto1} and \eqref{jnNnrelation}, we conclude that \eqref{goal} holds if
$j_nN_n=o(n^\frac12)$ and $j_n=o(N_n^\frac12)$.
Thus, if $j_n=o(n^\frac16)$ and $N_n=[n^\frac13]$, then \eqref{goal} holds. This completes the proof of the theorem.
\hfill$\square$

\section{Proof of Theorem \ref{lrmaxthm}}\label{lrmaxthmproof}
Part (ii) of the theorem follows immediately from part (i). For part (i),
let $i,j$ satisfy $1\le i\le j\le n$.
 We need to  calculate the   $P_n^{\text{\rm park}}$-probability that  $\sigma_i=j$ and that the location $i$ is a left-to-right maximum for $\sigma$.
To do this, we count how many parking functions $\{\pi_k\}_{k=1}^n$ yield a permutation $\sigma\in S_n$ with the above properties.
A permutation $\sigma\in S_n$ satisfies the above properties if and only if  $\{\sigma_1,\cdots, \sigma_{i-1}\}\subset [j-1]$ and $\sigma_i=j$.
It follows from the definition of the parking process that a parking function
$\{\pi_k\}_{k=1}^n$  yields such a permutation if and only if a subsequence of
  $\{\pi_1,\cdots, \pi_{j-1}\}$ of length $i-1$   constitutes
a parking function of size $i-1$, the other $j-i$ elements of $\{\pi_1,\cdots, \pi_{j-1}\}$ belong to  $[n]-[i]$,  and
 $\pi_j\in[i]$.
So we need to count how many such parking functions there are.

There are $\binom{j-1}{i-1}$ choices of $i-1$ elements for the subsequence of length $i-1$ from
$\{\pi_1,\cdots, \pi_{j-1}\}$. There are $i^{i-2}$ choices for such a subsequence of length $i-1$ to constitute a parking function of length $i-1$.
There are $i$ choices for $\pi_j$.
So this gives $\binom{j-1}{i-1}i^{i-1}$ choices for
$\pi_j$ and for the $i-1$ elements of $\{\pi_1,\cdots, \pi_{j-1}\}$
that are restricted to $[i]$.

 Denote by
  $\{\pi'_1,\cdots, \pi'_{j-i}\}$.
  the complementary  subsequence of  $j-i$ elements of
$\{\pi_1,\cdots, \pi_{j-1}\}$.
that all lie in $[n]-[i]$.
The components $\{\pi_{j+1},\cdots,\pi_n\}$ are without specific restrictions.
(But of course,  $\{\pi'_1,\cdots, \pi'_{j-i}\}$ and $\{\pi_{j+1},\cdots,\pi_n\}$ must be chosen so that $\{\pi_k\}_{k=1}^n$ is a parking function.)

In order to count how many choices there are for  $\{\pi'_1,\cdots, \pi'_{j-i}\}$ and $\{\pi_{j+1},\cdots,\pi_n\}$, we fix $l\in[n-j]$ and determine how many choices there are
for $\{\pi'_1,\cdots, \pi'_{j-1}\}$ and $\{\pi_{j+1},\cdots,\pi_n\}$, subject to the condition that a particular subsequence of
 $\{\pi_{j+1},\cdots,\pi_n\}$ of length $l$ has all of its elements in $[i]$, while the remaining elements in
  $\{\pi_{j+1},\cdots,\pi_n\}$ are in $[n]-[i]$.
 Then we will sum over all such $l$.
There are $\binom{n-j}l$  ways to choose
the subsequence of length $l$  from $\{\pi_{j+1},\cdots, \pi_n\}$.
 There are $i^l$ different possible values for these elements.
 Denote the subsequence of length $n-j-l$ of  elements of $\{\pi_{j+1},\cdots,\pi_n\}$ that are not in  $[i]$ by $\{\pi'_{j+m}\}_{m=1}^{n-j-l}$.

We now count how many
choices there are for
$(\pi'_1,\cdots\pi'_{j-i},\pi'_{j+1},\cdots \pi'_{n-l})$.
Our strategy will be to reduce the situation at hand to the following one (with appropriate choice of $n$ and $m$):
There is a one-way street with $n$ spaces, but with the first $m$ of them already taken up by a trailer. A sequence of $n-m$ cars enters, each with a preferred parking space between 1 and $n$. It was shown in \cite{EH} that the number of such sequences resulting in all $n-m$ cars successfully parking   is $(m+1)(n+1)^{n-m-1}$.

By definition, all of the entries of the  sequence  $(\pi'_1,\cdots\pi'_{j-i},\pi'_{j+1},\cdots \pi'_{n-l})$ lie in $[n]-[i]$.
Let $(\bar\pi'_1,\cdots, \bar\pi'_{n-i})$  denote the elements   $(\pi'_1,\cdots, \pi'_{j-i},\pi'_{j+1},\cdots, \pi'_{n-l})$ along with the $l$ fixed elements from $\{\pi_{j+1},\cdots, \pi_n\}$
that are in $[i]$, using the order in which they appear in the  parking function $\{\pi_k\}_{k=1}^n$.
Counting the number of choices for $(\pi'_1,\cdots\pi'_{j-i},\pi'_{j+1},\cdots \pi'_{n-l})$ is equivalent to counting the number of choices for
$(\bar\pi'_1,\cdots, \bar\pi'_{n-i})$, since the latter was obtained from the former by adding fixed components. Now let
$(\pi''_1,\cdots, \pi''_{n-i})$ denote what one obtains if one starts with the sequence  $(\bar\pi'_1,\cdots, \bar\pi'_{n-i})$  and slides the  $l$ entries that are in $[i]$ all the way to the left end of the sequence.
So the first $l$ entries in   $(\pi''_1,\cdots, \pi''_{n-i})$ belong to $[i]$ and the $n-i-l$ other entries are \newline
$(\pi'_1,\cdots, \pi'_{j-i},\pi'_{j+1},\cdots, \pi'_{n-l})$, in the this order.
By the symmetry of parking functions, counting the number of choices for $(\pi'_1,\cdots\pi'_{j-i},\pi'_{j+1},\cdots \pi'_{n-l})$ is equivalent to counting the number of choices for
$(\pi''_1,\cdots, \pi''_{n-i})$.

We now count the number of choices for $(\pi''_1,\cdots, \pi''_{n-i})$. Remember that the first $i$ parking spaces have already been filled. We need to consider how many choices there are  for $(\pi''_1,\cdots, \pi''_{n-i})$ so that the remaining $n-i$ cars can park. By construction, the first $l$ entries in
$(\pi''_1,\cdots, \pi''_{n-i})$ involve no choice; they are fixed. Furthermore, all of these entries are in $[i]$. Thus,
when
using $(\pi''_1,\cdots, \pi''_{n-i})$, the first $l$ cars from the $n-i$ cars    will park in spaces $i+1,\cdots, i+l$.
Now consider the state of affairs at this stage. The first $i+l$ positions are filled. What remains of the parking function is
$(\pi''_{l+1},\cdots, \pi''_{n-i})$, which has length $n-l-i$. All of the entries of
$(\pi''_{l+1},\cdots, \pi''_{n-i})$
 lie in $[n]-[i]$.
This is  equivalent to the
situation described above from  \cite{EH}.
Our case corresponds to the situation in \cite{EH}  with $m$ and $n$ replaced respectively by  $l$ and $n-l-i$.
More specifically, the number of choices for $(\pi''_{l+1},\cdots, \pi''_{n-i})$ is the number of choices for a parking function $(\Pi_1,\cdots, \Pi_{n-l-i})$, all of whose entries have values in $[n-i]$ in the scenario where there are  $n-i$ spaces and $n-i-l$ cars, and  a trailer  takes up the first $l$ spaces.
Thus, from the formula from \cite{EH} noted above, the number of such parking functions is $(l+1)(n-i+1)^{n-i-l-1}$.
So we conclude that there are $(l+1)(n-i+1)^{n-i-l-1}$ choices for $(\pi'_1,\cdots\pi'_{j-i},\pi'_{j+1},\cdots \pi'_{n-l})$.

From the above analysis, we conclude that the
number of parking functions
$\{\pi_k\}_{k=1}^n$
 that yield a permutation $\sigma\in S_n$ for which $\sigma_i=j$ and the location $i$ is a left-to-right maximum is given by
\begin{equation}\label{sigmai=j}
\sum_{l=0}^{n-j}\binom{j-1}{i-1}i^{i-1}\binom{n-j}li^l(l+1)(n-i+1)^{n-i-l-1}.
\end{equation}
A standard calculation gives
\begin{equation}\label{auxi}
\sum_{l=0}^{n-j}(l+1)\binom{n-j}li^l(n-i+1)^{n-j-l}=i(n-j)(n+1)^{n-j-1}+(n+1)^{n-j}.
\end{equation}
Thus, \eqref{sigmai=j} is equal to
$$
\binom{j-1}{i-1}i^{i-1}(n-i+1)^{j-i-1}\left(i(n-j)(n+1)^{n-j-1}+(n+1)^{n-j}\right).
$$
Since there are $(n+1)^{n-1}$ parking functions  $\{\pi_k\}_{k=1}^n$ of length $n$, it follows that
$$
\begin{aligned}
&P_n^{\text{park}}\left(i\ \text{is a left-to-right maximum and}\ \sigma_i=j\right)=\\
&\binom{j-1}{i-1}i^{i-1}(n-i+1)^{j-i-1}\left(i(n-j)(n+1)^{-j}+(n+1)^{1-j}\right),
\end{aligned}
$$
which is
\eqref{lrmaxform}.
\hfill $\square$

\section{Proof of part (i) of Theorem \ref{lrmaxasympthm}}\label{lrmaxasympthmproofi}
We first proof \eqref{lrmaxformagain}.
In light  of Theorem \ref{lrmaxthm}, we need to show that the right hand side of \eqref{explrmax} is equal to the right hand side of \eqref{lrmaxformagain}.
We write the right hand side of \eqref{explrmax} as
\begin{equation}\label{IandII}
\begin{aligned}
&\sum_{j=1}^n\sum_{i=1}^j\binom{j-1}{i-1}i^{i-1}(n-i+1)^{j-i-1}\left(i(n-j)(n+1)^{-j}+(n+1)^{1-j}\right)=I+II,\\
& \text{where}\ I=\sum_{j=1}^n\sum_{i=1}^j\binom{j-1}{i-1}i^i(n-i+1)^{j-i-1}(n-j)(n+1)^{-j}\\
&\text{and}\ II=\sum_{j=1}^n\sum_{i=1}^j\binom{j-1}{i-1}i^{i-1}(n-i+1)^{j-i-1}(n+1)^{1-j}.
\end{aligned}
\end{equation}

In \cite{R}, chapter 1 considers sums of the form
$$
A_n(x,y;p,q):=\sum_{k=0}^n\binom nk(x+k)^{k+p}(y+n-k)^{n-k+q}.
$$
When $p=-1$ and $q=0$, one has
$A_n(x,y;-1,0)=\frac{(x+y+n)^n}x$,
which is known as Abel's generalization of the binomial theorem.
It is easy to see that the symmetry formula $A_n(x,y;p,q)=A_n(y,x;q,p)$ holds; thus,
\begin{equation}\label{Abelbinom}
A_n(x,y;0,-1)=\frac{(x+y+n)^n}y.
\end{equation}
This allows us to calculate $II$.
We have
$$
\begin{aligned}
&\sum_{i=1}^j\binom{j-1}{i-1}i^{i-1}(n-i+1)^{j-i-1}=\sum_{r=0}^{j-1}\binom{j-1}r(r+1)^r(n-r)^{(j-1)-r-1}=\\
&A_{j-1}(1,n-j+1;0,-1)=\frac{(n+1)^{j-1}}{n-j+1}.
\end{aligned}
$$
Thus, we conclude that
\begin{equation}\label{II}
II=\sum_{j=1}^n\frac{(n+1)^{j-1}}{n-j+1}(n+1)^{1-j}=\sum_{j=1}^n\frac1j.
\end{equation}

We now turn to $I$.
The sum over $i$ that we  now need to consider is
\begin{equation}\label{A-1+1}
\begin{aligned}
&\sum_{i=1}^j\binom{j-1}{i-1}i^i(n-i+1)^{j-i-1}=\sum_{r=0}^{j-1}\binom{j-1}r(r+1)^{r+1}(n-r)^{(j-1)-r-1}=\\
&A_{j-1}(1,n-j+1;1,-1).
\end{aligned}
\end{equation}
In \cite{R}, a formula for $A_n(x,y;-1,1)$ ($=A_n(y,x;1,-1)$) is given, but  we were unable to exploit it for our purposes.
However, we were able to exploit the recursion  formula
$$
A_n(x,y;p,q)=A_{n-1}(x,y+1;p,q+1)+A_{n-1}(x+1,y;p+1,q),
$$
which can also be found in \cite{R}.
Using this recursion  formula, we have
\begin{equation}\label{recursionexploit}
A_{j-1}(1,n-j+1;1,-1)=A_j(0,n-j+1;0,-1)-A_{j-1}(0,n-j+2;0,0).
\end{equation}
By \eqref{Abelbinom},
\begin{equation}\label{0-1}
A_j(0,n-j+1;0,-1)=\frac{(n+1)^j}{n-j+1}.
\end{equation}
By \cite[page 23]{R},
$$
A_n(x,y;0,0)=\sum_{k=0}^n\binom nk(x+y+n)^k(n-k)!=n!\sum_{k=0}^n\frac{(x+y+n)^k}{k!}.
$$
Thus,
\begin{equation}\label{00}
A_{j-1}(0,n-j+2;0,0)=(j-1)!\sum_{k=0}^{j-1}\frac{(n+1)^k}{k!}.
\end{equation}
From \eqref{recursionexploit}-\eqref{00}, we obtain
\begin{equation}\label{1,-1}
A_{j-1}(1,n-j+1;1,-1)=\frac{(n+1)^j}{n-j+1}-(j-1)!\sum_{k=0}^{j-1}\frac{(n+1)^k}{k!}.
\end{equation}
Now
 \eqref{IandII}, \eqref{A-1+1} and  \eqref{1,-1} yield
\begin{equation}\label{I}
\begin{aligned}
&I=\sum_{j=1}^n A_{j-1}(1,n-j+1;1,-1)(n-j)(n+1)^{-j}=\\
&\sum_{j=1}^n\frac{n-j}{n-j+1}-\sum_{j=1}^n(n-j)(j-1)!\sum_{k=0}^{j-1}\frac{(n+1)^{k-j}}{k!}.
\end{aligned}
\end{equation}

From \eqref{II} and \eqref{I}, we have
\begin{equation}\label{I-II}
\begin{aligned}
&I+II=n-\sum_{j=1}^n(n-j)(j-1)!\sum_{k=0}^{j-1}\frac{(n+1)^{k-j}}{k!}=\\
&n-n\sum_{j=1}^n(j-1)!\sum_{k=0}^{j-1}\frac{(n+1)^{k-j}}{k!}+
\sum_{j=1}^nj!\sum_{k=0}^{j-1}\frac{(n+1)^{k-j}}{k!}.
\end{aligned}
\end{equation}
Using the well-known identity $\sum_{i=0}^r\binom{m+i}i=\binom{m+r+1}r$ in the third equality below, we
 have
\begin{equation}\label{oneterm}
\begin{aligned}
&\sum_{j=1}^n(j-1)!\sum_{k=0}^{j-1}\frac{(n+1)^{k-j}}{k!}=\sum_{l=1}^n(\frac1{n+1})^l\sum_{k=0}^{n-l}\frac1{k!}(k+l-1)!=\\
&\sum_{l=1}^n(\frac1{n+1})^l(l-1)!\sum_{k=0}^{n-l}\binom{k+l-1}k=\sum_{l=1}^n(\frac1{n+1})^l(l-1)!\binom nl=\\
&\sum_{l=1}^n\frac1l\frac{\prod_{i=0}^{l-1}(n-i)}{(n+1)^l}.
\end{aligned}
\end{equation}
Similarly, we have
\begin{equation}\label{otherterm}
\sum_{j=1}^nj!\sum_{k=0}^{j-1}\frac{(n+1)^{k-j}}{k!}=\sum_{l=1}^n(\frac1{n+1})^ll!\binom {n+1}{l+1}=\sum_{l=1}^n\frac{n+1}{l+1}\frac{\prod_{i=0}^{l-1}(n-i)}{(n+1)^l}.
\end{equation}
Using \eqref{oneterm} and \eqref{otherterm}, we can write the last two terms on the right hand side of \eqref{I-II} as
\begin{equation}\label{oneother}
\begin{aligned}
&-n\sum_{j=1}^n(j-1)!\sum_{k=0}^{j-1}\frac{(n+1)^{k-j}}{k!}+
\sum_{j=1}^nj!\sum_{k=0}^{j-1}\frac{(n+1)^{k-j}}{k!}=\\
&\sum_{j=1}^n\left(-\frac nl+\frac{n+1}{l+1}\right)\frac{\prod_{i=0}^{l-1}(n-i)}{(n+1)^l}=-\sum_{l=1}^n\frac{n-l}{l(l+1)}\frac{\prod_{i=0}^{l-1}(n-i)}{(n+1)^l}.
\end{aligned}
\end{equation}
From
\eqref{IandII}, \eqref{I-II} and    \eqref{oneother}, it follows that
the right hand side of \eqref{explrmax} is equal to the right hand side of \eqref{lrmaxformagain}.
\hfill $\square$

\section{Proof of part (ii) of Theorem \ref{lrmaxasympthm}}\label{lrmaxasympthmproofii}
In light of \eqref{lrmaxformagain}, we need to analyze
the asymptotic behavior of   $\sum_{l=1}^n\frac{n-l}{l(l+1)}\frac{\prod_{i=0}^{l-1}(n-i)}{(n+1)^l}$.
By Taylor's remainder theorem,
$$
-x-\frac12(\frac x{1-x})^2\le\log(1-x)\le-x,\ 0\le x<1.
$$
Writing $\log\frac{\prod_{i=0}^{l-1}(n-i)}{(n+1)^l}=\log\prod_{k=1}^l(1-\frac k{n+1})=\sum_{k=1}^l\log(1-\frac k{n+1})$, we then have
$$
  -\sum_{k=1}^l\left(\frac k{n+1}+\frac12\left(\frac k{n+1-k}\right)^2\right) \le \log\frac{\prod_{i=0}^{l-1}(n-i)}{(n+1)^l}\le -\sum_{k=1}^l\frac k{n+1},
$$
and thus
\begin{equation}\label{prodest}
e^{-\frac{l(l+1)}{2(n+1)}}e^{-\frac{l^3}{2(n+1-l)^2}}\le \frac{\prod_{i=0}^{l-1}(n-i)}{(n+1)^l}\le e^{-\frac{l(l+1)}{2(n+1)}},
\end{equation}
where we've used the fact that $\sum_{k=1}^l\frac12\left(\frac k{n+1-k}\right)^2\le \frac{l^3}{2(n+1-l)^2}$.
In what follows,  whenever we write that an expression depending on $n$ is $\theta^+(n^\alpha)$, we mean that it falls between $c_1n^\alpha$ and $c_2n^\alpha$ for all $n$, where
$c_1,c_2>0$.

We first prove the lower bound on $E_n^{\text{park}}X_n^{\text{LR-max}}$ in
\eqref{lrmasympform}. Thus, we need an upper bound on
$\sum_{l=1}^n\frac{n-l}{l(l+1)}\frac{\prod_{i=0}^{l-1}(n-i)}{(n+1)^l}$.
We break up the sum into two parts. In turns out that the optimal intermediate point
at which to break up the sum is $\theta^+(n^\frac12)$.  Thus, using \eqref{prodest} for the first inequality below, 
and recalling that $\sum_{l=1}^n\frac1{l(l+1)}=\sum_{l=1}^n\left(\frac1l-\frac1{l+1}\right)=1-\frac1{n+1}$,
we write
\begin{equation}\label{lowerboundasympexp}
\begin{aligned}
&\sum_{l=1}^n\frac{n-l}{l(l+1)}\frac{\prod_{i=0}^{l-1}(n-i)}{(n+1)^l}\le \sum_{l=1}^n\frac{n-l}{l(l+1)}e^{-\frac{l(l+1)}{2(n+1)}}=\\
&\sum_{l=1}^{[n^\frac12]}\frac{n-l}{l(l+1)}e^{-\frac{l(l+1)}{2(n+1)}}+\sum_{[n^\frac12]+1}^n\frac{n-l}{l(l+1)}e^{-\frac{l(l+1)}{2(n+1)}}\le
\sum_{n=1}^{[n^\frac12]}\frac{n-l}{l(l+1)}+\\
&e^{-\frac{n^2}{2(n+1)}}\sum_{[n^\frac12]+1}^n\frac{n-l}{l(l+1)}=\sum_{n=1}^n\frac{n-l}{l(l+1)}-
\left(1-e^{-\frac{n^2}{2(n+1)}}\right)\sum_{[n^\frac12]+1}^n\frac{n-l}{l(l+1)}=\\
&n(1-\frac1{n+1})-\sum_{l=1}^n\frac1{l+1}-\theta^+(n^\frac12)=n-\theta^+(n^\frac12).
\end{aligned}
\end{equation}
The lower bound in \eqref{lrmasympform} now follows from \eqref{lrmaxformagain} and \eqref{lowerboundasympexp}.

We now turn to the proof of the upper bound in \eqref{lrmasympform}. Thus,  now we need a lower bound on
$\sum_{l=1}^n\frac{n-l}{l(l+1)}\frac{\prod_{i=0}^{l-1}(n-i)}{(n+1)^l}$.
Note that for $l=l_n=o(n)$, the exponent on the left hand side of \eqref{pr1odest} is increasing in $l$ and satisfies
$\frac{l_n(l_n+1)}{2(n+1)}+\frac{l^3}{2(n+1-l)^2}=\theta^+(\frac{l_n^2}n)$.
Thus, for any $r\in\mathbb{N}$ and $\{\alpha_i\}_{i=1}^r$ satisfying
$a_1<\alpha_2<\cdots<\alpha_r<1$, we have from \eqref{prodest}
\begin{equation}\label{lowerbd}
\begin{aligned}
&\sum_{l=1}^n\frac{n-l}{l(l+1)}\frac{\prod_{i=0}^{l-1}(n-i)}{(n+1)^l}\ge\\
& e^{-\theta^+\left(n^{2\alpha_1-1}\right)}\sum_{l=1}^{[n^{\alpha_1}]}\frac{n-l}{l(l+1)}+
\sum_{i=2}^re^{-\theta^+\left(n^{2\alpha_i-1}\right)}\sum_{l=[n^{\alpha_{i-1}}]+1}^{[n^{\alpha_i}]}\frac{n-l}{l(l+1)}\ge\\
&\left(1-\theta^+\left(n^{2\alpha_1-1}\right)\right)\left(n(1-\frac1{[n^{\alpha_1}]+1})-\sum_{l=1}^{[n^{\alpha_1}]}\frac1{l+1}\right)+\\
&\sum_{i=2}^r\left(1-\theta^+\left(n^{2\alpha_i-1}\right)\right)
\left(n(\frac1{[n^{\alpha_{i-1}}]+1}-\frac1{[n^{\alpha_i}]+1})-
\sum_{l=[n^{\alpha_{i-1}}]+1}^{[n^{\alpha_i}]}\frac1{l+1}\right).
\end{aligned}
\end{equation}
Note that
\begin{equation}\label{lotsofthetas}
\begin{aligned}
&\left(1-\theta^+\left(n^{2\alpha_1-1}\right)\right)\left(n(1-\frac1{[n^{\alpha_1}]+1})-\sum_{l=1}^{[n^{\alpha_1}]}\frac1{l+1}\right)=\\
&n(1-\frac1{[n^{\alpha_1}]+1})-\theta^+\left(n^{2\alpha_1}\right);\\
&\left(1-\theta^+\left(n^{2\alpha_i-1}\right)\right)
\left(n(\frac1{[n^{\alpha_{i-1}}]+1}-\frac1{[n^{\alpha_i}]+1})-
\sum_{l=[n^{\alpha_{i-1}}]+1}^{[n^{\alpha_i}]}\frac1{l+1}\right)=\\
&n(\frac1{[n^{\alpha_{i-1}}]+1}-\frac1{[n^{\alpha_i}]+1})-\theta^+\left(n^{2\alpha_i-\alpha_{i-1}}\right),\ i=2,\cdots, r.
\end{aligned}
\end{equation}
From \eqref{lotsofthetas} and \eqref{lowerbd}, we obtain
\begin{equation}\label{n-thetas}
\begin{aligned}
&\sum_{l=1}^n\frac{n-l}{l(l+1)}\frac{\prod_{i=0}^{l-1}(n-i)}{(n+1)^l}\ge\\
&n-\theta^+\left(n^{1-\alpha_r}\right)-\theta^+\left(n^{2\alpha_1}\right)-\sum_{i=2}^r\theta^+\left(n^{2\alpha_i-\alpha_{i-1}}\right).
\end{aligned}
\end{equation}
We now set $\{\alpha_i\}_{i=1}^r$ in order to minimize the order of
$\theta^+\left(n^{1-\alpha_r}\right)+\theta^+\left(n^{2\alpha_1}\right)+\sum_{i=2}^r\theta^+\left(n^{2\alpha_i-\alpha_{i-1}}\right)$.
The minimal will occur when all of the exponents $1-\alpha_r,2\alpha_1,\{2\alpha_i-\alpha_{i-1}\}_{i=2}^r$ are equal to one another.
Solving these equations, one obtains
$$
\alpha_j=\frac{2^j-1}{2^{j-1}}\frac{2^{r-1}}{2^{r+1}-1},\ j=1,\cdots, r.
$$
The common value of the exponents is then $\frac{2^r}{2^{r+1}-1}$
Therefore, from \eqref{lrmaxformagain} and \eqref{n-thetas}, we conclude
that 
$$
E_n^{\text{park}}X_n^{\text{LR-max}}\le \theta^+\left(n^{\frac{2^r}{2^{r+1}-1}} \right).
$$ 
Since $r\in\mathbb{N}$ is arbitrary, this proves the upper bound in \eqref{lrmasympform}.
\hfill $\square$

\end{document}